\newtheorem{theorem}{Theorem}[section]
\newtheorem{proposition}{Proposition}[section]
\theoremstyle{definition}
\newtheorem{definition}{Definition}[section]
\newtheorem{example}{Example}[section]
\newtheorem{remark}{Remark}[section]
\numberwithin{equation}{section}
\begin{document}
\setcounter{page}{1}

\vspace*{1.0cm}
\title[Optimality conditions and duality relations in nonsmooth FIMP]
{Optimality conditions and duality relations in nonsmooth fractional interval-valued multiobjective optimization}
\author[N.H. Hung, N.V. Tuyen]{ Nguyen Huy Hung$^{1}$ and Nguyen Van Tuyen$^{1,*}$}
\maketitle
\vspace*{-0.6cm}

\begin{center}
{\footnotesize {\it

$^1$Department of Mathematics, Hanoi Pedagogical University 2, Xuan Hoa, Phuc Yen, Vinh Phuc, Vietnam
}}\end{center}

\vskip 4mm {\small\noindent {\bf Abstract.}
This paper deals with Pareto solutions of a nonsmooth fractional interval-valued multiobjective optimization.  We first introduce four types of  Pareto solutions of the considered problem by considering the lower-upper interval order relation and then apply some advanced tools of variational analysis and generalized differentiation to establish necessary optimality conditions for these solutions. Sufficient conditions for  Pareto solutions of such a problem are also provided by means of introducing the concepts of (strictly) generalized convex functions defined in terms of the limiting/Mordukhovich subdifferential of locally Lipschitzian functions. Finally, a Mond--Weir type dual model is formulated, and weak, strong and converse-like duality relations are examined.

\noindent {\bf Keywords.}
Fractional interval-valued multiobjective optimization; Optimality conditions; Duality; Limiting/Mordukhovich subdifferential; Pareto solutions;  Generalized convex-affine function.}

\renewcommand{\thefootnote}{}
\footnotetext{ $^*$Corresponding author.
\par
E-mail addresses: nguyenhuyhung@hpu2.edu.vn (N.H. Hung), nguyenvantuyen83@hpu2.edu.vn (N.V. Tuyen).
\par
Received ......; Accepted ....... }

\section{Introduction}

In this paper, we are interested in Pareto solutions of the following fractional multiobjective problem with   interval-valued objective functions:  
\begin{align}\label{problem}\tag{FIMP}
	LU-&\mathrm{Min}\;F(x):=\left(\frac{f_1(x)}{g_1(x)}, \ldots, \frac{f_m(x)}{g_m(x)}\right) 
	\\
	&\text{s.t.}\ \ x\in \Omega:=\{x\in S\,:\, h_j(x)\leq 0, j=1, \ldots, p\}, \notag
\end{align}
where $f_i, g_i\colon \mathbb{R}^n\to \mathcal{K}_c$, $i\in I:=\{1, \ldots, m\}$, are interval-valued functions defined respectively by $f_i(x)=[f_i^L(x), f_i^U(x)]$, $g_i(x)=[g_i^L(x), g_i^U(x)]$, $f_i^L,$ $f_i^U$, $g_i^L$,$ g_i^U\colon\mathbb{R}^n\to \mathbb{R}$  are locally Lipschitzian functions satisfying $f_i^L(x)\leq f_i^U(x)$ and 
$$0<g_i^L(x)\leq g_i^U(x)$$ for all $x\in S$ and $i\in I$, $\mathcal{K}_c$ is the class of all closed and bounded intervals in $\mathbb{R}$, i.e., 
$$\mathcal{K}_c=\{[a^L, a^U]\,:\, a^L, a^U\in\mathbb{R},\ \ a^L\leq a^U\},$$ 
$h_j\colon\mathbb{R}^n\to \mathbb{R}$, $j\in J:=\{1, \ldots, p\}$, are locally Lipschitzian functions, and $S$ is a nonempty and closed subset of $\mathbb{R}^n$.  

An interval-valued optimization problem is one of the deterministic optimization models to deal with the uncertain/incomplete data. More precisely, in  interval-valued optimization, the coefficients of objective and constraint functions are taken as closed intervals.
The study of optimality conditions and duality relations for  optimization problems with one or multiple interval-valued objective functions have recently received increasing interest in optimization community; see, e.g., \cite{Ahmad,Chalco-Cano et.al.-13,Hung-Tuan-Tuyen-22,Ishibuchi-Tanaka-90,Jennane,Kumar,Kummari-20,Luu-18,Osuna-Gomez-17,Qian,Singh-Dar-15,Singh-Dar-Kim-16,Singh-Dar-Kim-19,Su-2020,Tung-2019,Tung-2019b,Tuyen-2021,Wu-07,Wu-09,Wu-09-b,Wu-09-c,Zhang}  and the references therein. However, there are quite few publications devoted to  optimality conditions and duality relations for  fractional interval-valued optimization  problems; see \cite{Debnath-19,Dar-2021,Debnath-20}.

Debnath and Gupta \cite{Debnath-19} presented some necessary and sufficient optimality conditions for nondifferentiable fractional interval-valued  programming problems, where numerators of the objective function and constraint functions are convex, while denominators of the objective function are concave. Recently, these results have been extended to fractional interval-valued multiobjective problems \cite{Dar-2021,Debnath-20}. However, to the best of our knowledge, so far there have been no papers investigating optimality conditions and duality for fractional interval-valued multiobjective problems with locally Lipschitzian data.  

Motivated by the above observations, in this paper, we introduce some kinds of Pareto solutions with respect to lower-upper ($LU$) interval order relation for problems of the form \eqref{problem}. Then we employ the limiting/Mordukhovich subdifferential and the limiting/Mordukhovich normal cone  to derive necessary and sufficient optimality conditions for these Pareto solutions of problem \eqref{problem}. Along with optimality conditions, we state a dual problem in the sense of Mond--Weir to the primal one and examine weak, strong and converse duality relations under assumptions of (strictly) generalized convexity (cf. \cite{Bae-2022,Chuong-Kim-16-2,Chuong-16}). In addition, some examples are also given for analyzing the obtained results.

The paper is organized as follows. Section \ref{Preliminaries} contains some basic  definitions from variational analysis, interval analysis and several auxiliary results. In Section \ref{Optimiality-conditions}, we first introduce four kinds of Pareto solutions of problem \eqref{problem} and then establish necessary conditions for these solutions.  Sufficient optimality conditions for such solutions are provided by means of introducing (strictly) generalized convex functions defined in terms of the limiting subdifferential for locally Lipschitzian functions.  Section \ref{Duality-Relations} is devoted to describing duality relations.

\section{Preliminaries}\label{Preliminaries}

Throughout the paper, let $\mathbb{R}^n$ be the $n$-dimensional Euclidean space and  $\mathbb{R}^n_+$ be its nonnegative orthant.  The topological closure of a set $S$ is denoted  by  $\mathrm{cl}\,{S}$. As usual, the polar cone of a  set $S\subset \mathbb{R}^n$ is defined by 
\begin{equation*} 
	S^\circ:=\{x^*\in\mathbb{R}^n\;:\; \langle x^*, x\rangle\leq 0 \ \ \ \forall x\in S\}.
\end{equation*}

\begin{definition}[{see \cite{mor06,mor2018}}]{\rm  Given $\bar x\in  \mbox{cl}\,S$. The set
		\begin{equation*}
			N(\bar x; S):=\{z^*\in \mathbb{R}^n:\exists
			x^k\stackrel{S}\longrightarrow \bar x, \varepsilon_k\to 0^+, z^*_k\to z^*,
			z^*_k\in {\widehat N_{\varepsilon
					_k}}(x^k; S),\ \ \forall k \in\mathbb{N}\}
		\end{equation*}
		is called the {\em limiting/Mordukhovich normal cone}  of $S$ at $\bar x$, where
		\begin{equation*}
			\widehat N_\varepsilon  (x; S):= \bigg\{ {z^*  \in {\mathbb{R}^n} \;:\;\limsup_{u\overset{S} \rightarrow x}
				\frac{{\langle z^* , u - x\rangle }}{{\parallel u - x\parallel }} \leq \varepsilon } \bigg\}
		\end{equation*}
		is the set of  {\em $\varepsilon$-normals} of $S$  at $x$ and  $u\xrightarrow {{S}} x$ means that $u \rightarrow x$ and $u \in S$.
	}
\end{definition}

\medskip
Let  $\varphi \colon \mathbb{R}^n \to  \overline{\mathbb{R}}:=[-\infty, \infty]$ be an {\em extended-real-valued function}. The {\em  epigraph}  and  {\em domain} of $\varphi$ are denoted, respectively, by
\begin{align*}
	\mbox{epi }\varphi&:=\{(x, \alpha)\in\mathbb{R}^n\times\mathbb{R} \,:\,  \varphi(x)\leq \alpha\},
	\\
	\mbox{dom }\varphi &:= \{x\in \mathbb{R}^n \,:\, \ \ |\varphi(x)|<+\infty \}.
\end{align*}

\begin{definition}[{see \cite{mor06,mor2018}}]{\rm   
		Let $\bar x\in \mbox{dom }\varphi$. 
\begin{enumerate}[(i)]
	\item The set
	\begin{align*}
		\partial \varphi (\bar x):=\{x^*\in \mathbb{R}^n \,:\, (x^*, -1)\in N((\bar x, \varphi (\bar x)); \mbox{epi }\varphi )\},
	\end{align*}
	is called the {\it limiting/Mordukhovich subdifferential}  of $\varphi$ at $\bar x$. If $\bar x\notin \mbox{dom }\varphi$, then we put $\partial \varphi (\bar x)=\emptyset$.
	\item The set $\partial^+\varphi(\bar x):=-\partial(-\varphi)(\bar x)$ is called the {\em upper subdifferential}  of $\varphi$ at $\bar x$.
\end{enumerate}

	}
\end{definition}

We now summarize some properties of  the limiting  subdifferential that will be used in the sequel.
\begin{proposition}[{see \cite[Theorem 3.36]{mor06}}]\label{sum-rule} Let $\varphi_l\colon\mathbb{R}^n\to\overline{\mathbb{R}}$, $l=1, \ldots, p$, $p\geq 2$, be lower semicontinuous around $\bar x$ and let all but one of these functions be locally Lipschitzian around $\bar x$. Then we have the following inclusion
	\begin{equation*}
		\partial (\varphi_1+\ldots+\varphi_p) (\bar x)\subset \partial  \varphi_1 (\bar x) +\ldots+\partial \varphi_p (\bar x).
	\end{equation*}
\end{proposition}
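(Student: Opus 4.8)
The plan is to prove the inclusion by first reducing to the two-function case and then invoking the fundamental tool behind this calculus, namely the extremal principle, which in the present setting is most conveniently packaged as a \emph{fuzzy sum rule} for regular subgradients followed by a limiting passage. First I would reduce to $p=2$ by induction. Suppose $\varphi_2,\ldots,\varphi_p$ are the ones that are locally Lipschitzian around $\bar x$; then their partial sum $\psi:=\varphi_2+\cdots+\varphi_p$ is again locally Lipschitzian around $\bar x$, since a finite sum of locally Lipschitz functions is locally Lipschitz, while $\varphi_1$ is merely lower semicontinuous. Thus it suffices to establish $\partial(\varphi_1+\psi)(\bar x)\subset\partial\varphi_1(\bar x)+\partial\psi(\bar x)$ for one lsc summand and one Lipschitz summand, and then iterate, at each stage peeling off a single Lipschitz function. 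The lower semicontinuity of each summand is what guarantees that the relevant epigraphs are locally closed, which the subsequent limiting arguments require.

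For the two-function case I would work at the level of regular (Fr\'echet) subgradients and then take limits. Introduce the regular subdifferential $\widehat\partial\varphi(x):=\{x^*:(x^*,-1)\in\widehat N_0((x,\varphi(x));\mbox{epi}\,\varphi)\}$, so that, directly from the definition of the limiting normal cone recalled above (with the simplification $\varepsilon_k=0$ valid in the Asplund space $\mathbb{R}^n$), one has the representation
$$\partial\varphi(\bar x)=\Big\{\lim_k x_k^*\;:\;x_k\to\bar x,\ \varphi(x_k)\to\varphi(\bar x),\ x_k^*\in\widehat\partial\varphi(x_k)\Big\}.$$
The core estimate is the fuzzy sum rule: for every $\varepsilon>0$ and every $x^*\in\widehat\partial(\varphi_1+\varphi_2)(\bar x)$ there exist points $u_1,u_2$ with $u_l$ close to $\bar x$ and $\varphi_l(u_l)$ close to $\varphi_l(\bar x)$, together with regular subgradients $u_l^*\in\widehat\partial\varphi_l(u_l)$, such that $\|x^*-(u_1^*+u_2^*)\|\leq\varepsilon$. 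This is obtained by applying the approximate extremal principle to $\mbox{epi}\,\varphi_1$ and $\mbox{epi}\,\varphi_2$, viewed as an extremal system at the point generated by $x^*$; the lower semicontinuity hypothesis is exactly what makes these sets locally closed so that the extremal principle applies.

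Finally I would pass to the limit. Take $x^*\in\partial(\varphi_1+\varphi_2)(\bar x)$ and, using the representation above, write $x^*=\lim_k x_k^*$ with $x_k^*\in\widehat\partial(\varphi_1+\varphi_2)(x_k)$, $x_k\to\bar x$ and $(\varphi_1+\varphi_2)(x_k)\to(\varphi_1+\varphi_2)(\bar x)$. Applying the fuzzy sum rule at each $x_k$ with $\varepsilon=\varepsilon_k\to 0^+$ produces points $u_{l,k}\to\bar x$ with $\varphi_l(u_{l,k})\to\varphi_l(\bar x)$ and regular subgradients $u_{l,k}^*\in\widehat\partial\varphi_l(u_{l,k})$ satisfying $x_k^*-(u_{1,k}^*+u_{2,k}^*)\to 0$. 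Here the Lipschitz hypothesis on $\varphi_2$ is decisive: regular subgradients of a function Lipschitz with modulus $\ell$ have norm at most $\ell$, so $\{u_{2,k}^*\}$ is bounded; passing to a subsequence gives $u_{2,k}^*\to y^*\in\partial\varphi_2(\bar x)$, whence $u_{1,k}^*=(u_{1,k}^*+u_{2,k}^*)-u_{2,k}^*\to x^*-y^*\in\partial\varphi_1(\bar x)$. This yields $x^*=(x^*-y^*)+y^*\in\partial\varphi_1(\bar x)+\partial\varphi_2(\bar x)$, which together with the induction completes the proof. The main obstacle, and the place where both standing hypotheses enter, is precisely this limiting passage: lower semicontinuity secures closedness of the epigraphs needed for the extremal principle, while local Lipschitz continuity supplies the uniform bound that makes the subgradient sequence of the Lipschitz summand precompact, so that a convergent subsequence can be extracted.
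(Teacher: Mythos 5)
The paper offers no proof of this proposition: it is a preliminary quoted from the cited reference (Theorem 3.36 of Mordukhovich's book), so the only meaningful comparison is with the proof given there, and your argument --- induction reducing to the two-function semi-Lipschitzian case, the finite-dimensional representation of limiting subgradients as limits of regular subgradients at nearby points with converging function values, the fuzzy sum rule obtained from the approximate extremal principle applied to the epigraphs, and the limiting passage in which the Lipschitz modulus bounds the regular subgradients of $\varphi_2$ and supplies the compactness needed to extract a convergent subsequence --- is essentially that standard proof, and it is correct. The one detail worth making explicit is why $\varphi_1(u_{1,k})\to\varphi_1(\bar x)$ (required before the representation can be applied to the merely lsc summand $\varphi_1$): the fuzzy rule only ties $\varphi_1(u_{1,k})$ to $\varphi_1(x_k)$, and the missing link $\varphi_1(x_k)\to\varphi_1(\bar x)$ follows from $(\varphi_1+\varphi_2)(x_k)\to(\varphi_1+\varphi_2)(\bar x)$ together with the continuity of the Lipschitz summand $\varphi_2$.
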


Recall that, the function  $\varphi$ is called {\em lower semicontinuous $($l.s.c.$)$ at} a point  $\bar x\in\mathrm{dom}\varphi$ if
\begin{equation*}
	\varphi(\bar x)\leq \liminf_{x\to\bar x} \varphi(x).
\end{equation*}

We say that $\varphi$ is l.s.c. around $\bar x$ when it is l.s.c. at any point of some neighborhood of $\bar{x}$. The function $\varphi$ is called {\em locally Lipschitzian around} $\bar{x}$, or {\em  Lipschitz continuous around} $\bar{x}$ if there is a
neighborhood $U$ of this point  and a constant $l\geq 0$  such that 
\begin{equation*}
\|\varphi(v)-\varphi(u)\|\leq l\|v-u\| \ \ \text{for all}\ \ u,v\in U.
\end{equation*}

\begin{proposition}[{see \cite[Theorem 3.46]{mor06}}]\label{max-rule}
	Let $\varphi_l\colon\mathbb{R}^n\to\overline{\mathbb{R}}$, $l=1, \ldots, p$,  be  locally Lipschitzian around $\bar x$. Then the function
	$ \phi(\cdot):=\max\{\varphi_l(\cdot):l=1, \ldots, p\}$
	is also locally Lipschitzian around $\bar x$ and one has
	\begin{equation*}
		\partial \phi(\bar x)\subset \bigcup\bigg\{\partial\bigg(\sum_{l=1}^{p}\lambda_l\varphi_l\bigg)(\bar x)\;:\; (\lambda_1, \ldots, \lambda_p)\in\Lambda(\bar x)\bigg\},
	\end{equation*}
	where 
	$$\Lambda(\bar x):=\big\{(\lambda_1, \ldots, \lambda_p)\;:\; \lambda_l\geq 0, \sum_{l=1}^{p}\lambda_l=1, \lambda_l[\varphi_l(\bar x)-\phi(\bar x)]=0\big\}.$$
\end{proposition}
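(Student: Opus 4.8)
The plan is to recognize $\phi$ as the composition of the finite maximum function on $\mathbb{R}^p$ with the vector of data functions, and then to invoke the limiting-subdifferential chain rule—the natural companion of the sum rule in Proposition \ref{sum-rule}—so that the simplex multipliers $\lambda$ arise exactly as the convex subgradients of the outer maximum. First I would dispose of the Lipschitz claim: fixing a neighborhood $U$ of $\bar x$ and a constant $L\ge 0$ on which every $\varphi_l$ is Lipschitz with modulus $L$, the elementary inequality $|\max_l a_l-\max_l b_l|\le\max_l|a_l-b_l|$ gives, for all $u,v\in U$,
\[
|\phi(v)-\phi(u)|\le\max_{1\le l\le p}|\varphi_l(v)-\varphi_l(u)|\le L\|v-u\|,
\]
so $\phi$ is locally Lipschitzian around $\bar x$.

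Next I would set up the composition. Define $\Phi\colon\mathbb{R}^n\to\mathbb{R}^p$ by $\Phi(x):=(\varphi_1(x),\dots,\varphi_p(x))$ and $\theta\colon\mathbb{R}^p\to\mathbb{R}$ by $\theta(y):=\max_{1\le l\le p}y_l$, so that $\phi=\theta\circ\Phi$. The inner map $\Phi$ is locally (indeed strictly) Lipschitzian since each component is, while the outer map $\theta$ is convex and globally Lipschitzian. For a convex function the limiting and the convex subdifferentials coincide, and a routine convex-analysis computation gives, at $\bar y:=\Phi(\bar x)$,
\[
\partial\theta(\bar y)=\Big\{\lambda\in\mathbb{R}^p_+\;:\;\textstyle\sum_{l=1}^p\lambda_l=1,\ \lambda_l=0\ \text{whenever}\ \bar y_l<\theta(\bar y)\Big\}.
\]
Because $\bar y_l=\varphi_l(\bar x)$ and $\theta(\bar y)=\phi(\bar x)$, the vanishing of $\lambda_l$ on inactive indices is precisely the condition $\lambda_l[\varphi_l(\bar x)-\phi(\bar x)]=0$; hence $\partial\theta(\bar y)=\Lambda(\bar x)$. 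Moreover, since $\theta$ is Lipschitzian its singular subdifferential is trivial, $\partial^\infty\theta(\bar y)=\{0\}$.

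I would then apply the chain rule for the limiting subdifferential of $\theta\circ\Phi$. As $\partial^\infty\theta(\bar y)=\{0\}$, the qualification condition of the chain rule is satisfied automatically, so
\[
\partial\phi(\bar x)=\partial(\theta\circ\Phi)(\bar x)\subset\bigcup_{\lambda\in\partial\theta(\bar y)}\partial\langle\lambda,\Phi\rangle(\bar x).
\]
Since $\langle\lambda,\Phi\rangle(\cdot)=\sum_{l=1}^p\lambda_l\varphi_l(\cdot)$ and $\partial\theta(\bar y)=\Lambda(\bar x)$, the right-hand side is exactly $\bigcup\{\partial(\sum_{l=1}^p\lambda_l\varphi_l)(\bar x):\lambda\in\Lambda(\bar x)\}$, which is the asserted inclusion.

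The step I would treat most carefully—and the main obstacle—is justifying the chain rule in the sharp \emph{scalarized} form $\partial(\theta\circ\Phi)(\bar x)\subset\bigcup_\lambda\partial\langle\lambda,\Phi\rangle(\bar x)$. This rests on identifying the coderivative $D^*\Phi(\bar x)(\lambda)$ with the subdifferential $\partial\langle\lambda,\Phi\rangle(\bar x)$ of the scalarization, which requires $\Phi$ to be strictly Lipschitzian (automatic here, since the range is finite-dimensional) together with the qualification condition (automatic, since $\theta$ is Lipschitzian). It is exactly this scalarized form that produces the sharp multiplier term $\partial(\sum_l\lambda_l\varphi_l)$; a more elementary route through the epigraphical identity $\mathrm{epi}\,\phi=\bigcap_l\mathrm{epi}\,\varphi_l$ and the intersection rule for normal cones would only deliver the coarser estimate $x^*\in\sum_l\lambda_l\partial\varphi_l(\bar x)$, so I would not take that route.
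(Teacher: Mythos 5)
The paper states this proposition without proof, citing \cite[Theorem 3.46]{mor06}, so there is no in-paper argument to compare against; your proof is correct and is essentially the argument given in that reference. Your decomposition $\phi=\theta\circ\Phi$ with the convex max function as outer map, the identification $\partial\theta(\Phi(\bar x))=\Lambda(\bar x)$, and the scalarized chain rule $\partial(\theta\circ\Phi)(\bar x)\subset\bigcup_{\lambda\in\partial\theta(\Phi(\bar x))}\partial\langle\lambda,\Phi\rangle(\bar x)$ (legitimate here because $\Phi$, having finite-dimensional range, is strictly Lipschitzian, and the Lipschitz continuity of $\theta$ makes the qualification condition automatic) reproduce Mordukhovich's proof, including your correct observation that the sharp multiplier term $\partial\bigl(\sum_{l}\lambda_l\varphi_l\bigr)(\bar x)$ comes from scalarization and would be lost in the coarser epigraph-intersection estimate $\sum_{l}\lambda_l\partial\varphi_l(\bar x)$.
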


\begin{proposition}[{\rm see \cite[ Corollary 1.111(ii)]{mor06}}]\label{quotion-rule}
	Let $\varphi_i\colon\mathbb{R}^n\to \overline{\mathbb{R}},$ $i=1, 2$, be locally Lipschitzian around $\bar x$. If $\varphi_2(\bar x)\neq 0$, then we have
	\begin{equation*}
		\partial \left(\frac{\varphi_1}{\varphi_2}\right)(\bar x)\subset \frac{\partial(\varphi_2(\bar{x})\varphi_1)(\bar{x})+\partial(-\varphi_1(\bar{x})\varphi_2)(\bar{x})}{[\varphi_2(\bar{x})]^2}.
	\end{equation*}  
\end{proposition}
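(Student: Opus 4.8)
The plan is to realize the quotient as the composition of a smooth outer function with the Lipschitzian vector mapping $\Phi:=(\varphi_1,\varphi_2)\colon\mathbb{R}^n\to\mathbb{R}^2$ and then reduce the result to the sum rule of Proposition \ref{sum-rule}. Since $\varphi_2$ is locally Lipschitzian, it is continuous around $\bar x$, so the hypothesis $\varphi_2(\bar x)\neq 0$ guarantees a neighborhood of $\bar x$ on which $\varphi_2$ does not vanish; hence $\varphi_1/\varphi_2$ is well defined and locally Lipschitzian around $\bar x$. Writing $\Theta(u,v):=u/v$, which is $C^\infty$ (in particular strictly differentiable) on a neighborhood of $\big(\varphi_1(\bar x),\varphi_2(\bar x)\big)$ precisely because its second coordinate is nonzero, we have $\varphi_1/\varphi_2=\Theta\circ\Phi$ near $\bar x$, with $\nabla\Theta(u,v)=\big(1/v,\,-u/v^2\big)$.

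First I would invoke the chain rule for the limiting subdifferential of a composition with a strictly differentiable outer function (the scalarization form in \cite{mor06}), which yields
\begin{equation*}
\partial\left(\frac{\varphi_1}{\varphi_2}\right)(\bar x)\subset \partial\big\langle \nabla\Theta(\varphi_1(\bar x),\varphi_2(\bar x)),\,\Phi(\cdot)\big\rangle(\bar x)=\partial\left(\frac{1}{\varphi_2(\bar x)}\,\varphi_1-\frac{\varphi_1(\bar x)}{[\varphi_2(\bar x)]^2}\,\varphi_2\right)(\bar x).
\end{equation*}
Factoring out the positive constant $1/[\varphi_2(\bar x)]^2$ and using that the limiting subdifferential commutes with multiplication by a positive scalar, the right-hand side equals
\begin{equation*}
\frac{1}{[\varphi_2(\bar x)]^2}\,\partial\big(\varphi_2(\bar x)\varphi_1-\varphi_1(\bar x)\varphi_2\big)(\bar x).
\end{equation*}

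Finally, I would apply the sum rule of Proposition \ref{sum-rule} to the pair $\varphi_2(\bar x)\varphi_1$ and $-\varphi_1(\bar x)\varphi_2$ (both locally Lipschitzian around $\bar x$ as constant multiples of $\varphi_1$ and $\varphi_2$), obtaining
\begin{equation*}
\partial\big(\varphi_2(\bar x)\varphi_1-\varphi_1(\bar x)\varphi_2\big)(\bar x)\subset \partial\big(\varphi_2(\bar x)\varphi_1\big)(\bar x)+\partial\big(-\varphi_1(\bar x)\varphi_2\big)(\bar x),
\end{equation*}
and dividing through by $[\varphi_2(\bar x)]^2$ gives exactly the claimed inclusion. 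Apart from the routine algebra, the one genuinely substantial ingredient is the composition/chain rule through the smooth outer map $\Theta$: this is where the nonsmooth calculus really enters, and it must be justified via the strict differentiability of $\Theta$ at $\big(\varphi_1(\bar x),\varphi_2(\bar x)\big)$ together with the Lipschitz continuity of $\Phi$. Everything downstream — the positive scaling and the splitting into two subdifferentials — is elementary once that chain rule is in hand. An alternative, slightly more self-contained route would first establish a reciprocal rule for $\partial(1/\varphi_2)(\bar x)$ and then combine it with a product rule, but this still rests on the same smooth-chain-rule machinery and introduces extra bookkeeping with the upper subdifferential $\partial^{+}$, so I would prefer the direct composition argument above.
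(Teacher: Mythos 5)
Your proof is correct, and it follows essentially the same route as the source: the paper gives no proof of this proposition, quoting it directly from \cite[Corollary 1.111(ii)]{mor06}, where it is derived exactly as you propose --- by composing $\Phi=(\varphi_1,\varphi_2)$ with the smooth quotient map $(u,v)\mapsto u/v$, applying the chain rule for a strictly differentiable outer function, factoring out the positive constant $1/[\varphi_2(\bar x)]^2$, and splitting the resulting subdifferential with the sum rule of Proposition \ref{sum-rule}. Nothing is missing: you correctly justify that $\varphi_1/\varphi_2$ is well defined and Lipschitzian near $\bar x$ and that the factored constant is positive, so the argument is complete.
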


\begin{proposition}[{\rm see \cite[Proposition 1.114]{mor06}}] \label{Fermat-rule} Let   $\varphi\colon\mathbb{R}^n\to\overline{\mathbb{R}}$  be finite at $\bar x$. If $\bar x$ is a local minimizer of  $\varphi$, then $ 0\in\partial\varphi(\bar x).$
\end{proposition}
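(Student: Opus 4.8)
The plan is to unwind the definition of the limiting subdifferential in terms of the limiting normal cone to the epigraph and then to exhibit the vector $(0,-1)$ as a normal to $\mathrm{epi}\,\varphi$ at the point $(\bar x, \varphi(\bar x))$. Indeed, by the definition of $\partial\varphi(\bar x)$, the desired conclusion $0\in\partial\varphi(\bar x)$ is equivalent to
$$(0,-1)\in N\big((\bar x, \varphi(\bar x));\, \mathrm{epi}\,\varphi\big),$$
so the whole statement reduces to producing this one normal vector. Since the construction of $N(\cdot;S)$ only involves points of the epigraph near $(\bar x,\varphi(\bar x))$, it is harmless to work locally and to use the local minimality of $\bar x$ directly.

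First I would pass from the limiting normal cone to the regular ($\varepsilon=0$) level. From the definition of the $\varepsilon$-normals one has $\widehat N_0(x;S)\subset \widehat N_\varepsilon(x;S)$ for every $\varepsilon\geq 0$; hence any vector lying in $\widehat N_0$ is automatically a limiting normal, obtained in the definition of $N(\cdot;S)$ by taking the constant sequence $x^k\equiv (\bar x, \varphi(\bar x))$, any $\varepsilon_k\to 0^+$ (e.g. $\varepsilon_k=1/k$), and $z^*_k\equiv (0,-1)$. Thus it suffices to verify
$$\limsup_{(x,\alpha)\xrightarrow{\mathrm{epi}\,\varphi}(\bar x,\varphi(\bar x))} \frac{\langle (0,-1),\,(x,\alpha)-(\bar x,\varphi(\bar x))\rangle}{\|(x,\alpha)-(\bar x,\varphi(\bar x))\|}\leq 0.$$
The core computation then exploits minimality: for $(x,\alpha)\in\mathrm{epi}\,\varphi$ one has $\alpha\geq \varphi(x)$, and since $\bar x$ is a local minimizer, $\varphi(x)\geq \varphi(\bar x)$ for all $x$ in a neighborhood of $\bar x$. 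Consequently $\alpha-\varphi(\bar x)\geq 0$, and the numerator equals $-(\alpha-\varphi(\bar x))\leq 0$. As the denominator is positive, every quotient in the $\limsup$ is nonpositive, so the $\limsup$ is $\leq 0$; this places $(0,-1)$ in $\widehat N_0\big((\bar x,\varphi(\bar x));\,\mathrm{epi}\,\varphi\big)$ and, via the constant-sequence choice above, in $N\big((\bar x,\varphi(\bar x));\,\mathrm{epi}\,\varphi\big)$.

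There is essentially no serious obstacle in this argument; the only point to watch is that the pairs $(x,\alpha)$ entering the $\limsup$ actually lie in the neighborhood on which minimality holds, but this is automatic because they converge to $(\bar x,\varphi(\bar x))$, forcing $x\to\bar x$. The transition from regular to limiting normals is purely formal, resting on the inclusion $\widehat N_0\subset \widehat N_\varepsilon$ and a constant sequence, so the real content lies entirely in the elementary sign computation coming from local optimality.
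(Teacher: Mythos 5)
Your proof is correct. Note that the paper does not prove this proposition at all --- it is imported by citation from Mordukhovich's book (Proposition 1.114) --- so there is no internal proof to compare against; what you have written is the standard argument, adapted to the paper's geometric definition of $\partial\varphi$ via the epigraph. Your key step, showing $(0,-1)\in \widehat N_0\big((\bar x,\varphi(\bar x));\,\mathrm{epi}\,\varphi\big)$ from the sign computation $-(\alpha-\varphi(\bar x))\le 0$, is exactly the epigraphical form of the usual one-line proof that a local minimizer satisfies $0\in\hat\partial\varphi(\bar x)$ (the regular/Fr\'echet subdifferential), combined with the elementary inclusion of regular normals into limiting normals via constant sequences and $\varepsilon_k\to 0^+$. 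Both the sign argument (including the remark that points entering the $\limsup$ eventually have $x$ in the neighborhood of local minimality) and the passage from $\widehat N_0$ to $N$ are handled correctly, so the proof is complete as written.
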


Next we recall some definitions and facts in interval analysis. Let $A=[a^L, a^U]$ and $B=[b^L, b^U]$ be two intervals in $\mathcal{K}_c$. Then, we define
\begin{enumerate}[(i)]
	\item $A+B:=\{a+b\,:\, a\in A, b\in B\}=[a^L+b^L, a^U+b^U]$;
	\item $A-B:=\{a-b\,:\, a\in A, b\in B\}=[a^L-b^U, a^U-b^L]$;
	\item For each $k\in\mathbb{R}$,
	\begin{equation*}kA:=\{ka\,:\, a\in A\}=
		\begin{cases}
			[ka^L, ka^U]\ \ \text{if}\ \ k\geq 0,
			\\
			[ka^U, ka^L]\ \ \text{if}\ \ k < 0; 
		\end{cases}
	\end{equation*}
\item $\frac{A}{B}:=\left[\min\left(\frac{a^L}{b^L}, \frac{a^L}{b^U},\frac{a^U}{b^L}, \frac{a^U}{b^U}\right), \max\left(\frac{a^L}{b^L}, \frac{a^L}{b^U},\frac{a^U}{b^L}, \frac{a^U}{b^U}\right) \right]$, if $0\notin B$,
\end{enumerate}
see, e.g., \cite{Alefeld,Moore-1966,Moore-1979}  for more details.

\begin{definition}[{see \cite{Ishibuchi-Tanaka-90,Wu-07}}]{\rm 
		Let $A=[a^L, a^U]$ and $B=[b^L, b^U]$ be two intervals in $\mathcal{K}_c$. We say that:
		\begin{enumerate}[(i)]
			\item $A\leq_{LU} B$ if $a^L\leq b^L$ and $a^U\leq b^U$.
			
			\item  $A<_{LU} B$ if $A\leq_{LU} B$ and $A\neq B$, or, equivalently, $A<_{LU} B$ if
			\\
			$ 
			\begin{cases}
				a^L<b^L
				\\
				a^U\leq b^U,
			\end{cases}
			$ 
			or \ \ \ \
			$ 
			\begin{cases}
				a^L\leq b^L
				\\
				a^U< b^U,
			\end{cases}
			$ 
			or \ \ \ \
			$ 
			\begin{cases}
				a^L< b^L
				\\
				a^U< b^U.
			\end{cases}
			$ 
			\item $A<^s_{LU} B$ if $a^L<b^L$ and $a^U<b^U$.
		\end{enumerate}
	 }
\end{definition}
\section{Optimality conditions}\label{Optimiality-conditions}

We now introduce Pareto solutions of \eqref{problem} with respect to $LU$ interval order relation. For the sake of convenience, {\em we always assume hereafter} that $f_i^L(x)\geq 0$, $\forall x\in S$ and $i\in I$.  For each $i\in I$ and $x\in\mathbb{R}^n$, put $F_i(x):=\frac{f_i(x)}{g_i(x)}$. By definition, we have
\begin{equation*}
	F_i(x):=\frac{f_i(x)}{g_i(x)}=\left[\frac{f_i^L(x)}{g_i^U(x)}, \frac{f_i^U(x)}{g_i^L(x)}\right].
\end{equation*}

\begin{definition}\label{Defi-solution}{\rm
		Let $\bar x\in \Omega$. We say that:
		\begin{enumerate}[(i)]
			\item $\bar x$ is a {\em type-1 Pareto solution} of \eqref{problem}, denoted by $\bar x\in \mathcal{S}_1\eqref{problem}$, if there is no $x\in \Omega$ such that
			\begin{equation*}
				\begin{cases}
					F_i(x)\leq_{LU} F_i(\bar x),\ \ &\forall i\in I,
					\\
					F_k(x)<_{LU} F_k(\bar x),\ \ &\text{for at least one}\ \ k\in I.
				\end{cases} 
			\end{equation*}
			
			\item $\bar x$ is a {\em type-2 Pareto solution} of \eqref{problem}, denoted by $\bar x\in \mathcal{S}_2\eqref{problem}$, if there is no $x\in \Omega$ such that
			\begin{equation*}
				\begin{cases}
					F_i(x)\leq_{LU} F_i(\bar x),\ \ &\forall i\in I,
					\\
					F_k(x)<^s_{LU}F_k(\bar x),\ \ &\text{for at least one}\ \ k\in I.
				\end{cases} 
			\end{equation*}
			
			\item  $\bar x$ is a {\em type-1 weakly Pareto solution} of \eqref{problem}, denoted by $\bar x\in\mathcal{S}_1^{w}\eqref{problem}$, if there is no $x\in \Omega$ such that
			\begin{equation*}
				F_i(x)<_{LU} F_i(\bar x),\ \ \forall i\in I.
			\end{equation*}
			
			\item $\bar x$ is a {\em type-2 weakly Pareto solution} of \eqref{problem}, denoted by $\bar x\in\mathcal{S}_2^{w}\eqref{problem}$, if there is no $x\in \Omega$ such that
			\begin{equation*}
				F_i(x)<^s_{LU} F_i(\bar x),\ \ \forall i\in I.
			\end{equation*}    
		\end{enumerate}
	}
\end{definition}

\begin{remark}\label{Remark-3.1}
	{\rm The following relations are immediate from the definition of Pareto solutions.
		\begin{enumerate}[(i)]
			\item $\mathcal{S}_1\eqref{problem} \subseteq\mathcal{S}_2\eqref{problem} \subseteq \mathcal{S}_2^{w}\eqref{problem}$. 
			
			\item 	$\mathcal{S}_1\eqref{problem} \subseteq \mathcal{S}_1^{w}\eqref{problem} \subseteq \mathcal{S}_2^{w}\eqref{problem}$. 			
		\end{enumerate}	 
Furthermore, 	the above inclusions may be strict; see, e.g., \cite[Examples 3.3--3.5 ]{Tung-2019}.}
\end{remark}

The following result provides a Fritz-John type necessary condition for  type-2 weakly Pareto solutions of problem \eqref{problem}.

\begin{theorem}\label{Necessary-Theorem} If $\bar x\in \mathcal{S}_2^{w}\eqref{problem}$, then there exist $\lambda_i^L\geq 0$,  $\lambda_i^U\geq 0$, $i\in I$, and $\mu_j\geq 0$, $j\in J$ with $\sum_{i\in I}(\lambda_i^L+\lambda_i^U)+\sum_{j\in J}\mu_j=1$, such that
	\begin{align} 
		&0\in \sum_{i\in I}\frac{\lambda_i^L}{g_i^U(\bar x)} \left[\partial f_i^L(\bar x)-\frac{f_i^L(\bar x)}{g_i^U(\bar x)}\partial^+ g_i^U(\bar x)\right]+\sum_{i\in I}\frac{\lambda_i^U}{g_i^L(\bar x)} \left[\partial f_i^U(\bar x)-\frac{f_i^U(\bar x)}{g_i^L(\bar x)}\partial^+ g_i^L(\bar x)\right]\notag
		\\
		&\ \ \ \ \ \ \ \ \ \ \ \ \ \ \ \ \ \ \ \ \ \ \ \ \ \ \ \ \ \ \ \ \ \ \ \ \ \ \ \ \ \ +\sum_{j\in J}\mu_j\partial h_j(\bar x)+N(\bar x; S),\ \ \mu_j h_j(\bar x)=0, \ \ j\in J. \label{Necessary-conditions}
	\end{align}
\end{theorem}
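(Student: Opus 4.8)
The plan is to convert the Pareto optimality of $\bar x$ into the ordinary (local) minimality of a single locally Lipschitzian function over the closed set $S$, and then to process the resulting subdifferential inclusion with Propositions~\ref{sum-rule}--\ref{quotion-rule}. Writing $\phi_i^L(\cdot):=\frac{f_i^L(\cdot)}{g_i^U(\cdot)}$ and $\phi_i^U(\cdot):=\frac{f_i^U(\cdot)}{g_i^L(\cdot)}$ for the two endpoint functions of $F_i$, and letting $J(\bar x):=\{j\in J:h_j(\bar x)=0\}$ be the active index set, I would introduce
\begin{equation*}
	\Theta(x):=\max_{i\in I,\ j\in J(\bar x)}\bigl\{\phi_i^L(x)-\phi_i^L(\bar x),\ \phi_i^U(x)-\phi_i^U(\bar x),\ h_j(x)\bigr\}.
\end{equation*}
Since each $f_i^L,f_i^U,g_i^L,g_i^U,h_j$ is locally Lipschitzian and $g_i^L,g_i^U$ are bounded away from $0$ on $S$, the quotients $\phi_i^L,\phi_i^U$ are locally Lipschitzian, hence so is $\Theta$. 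The first key step is to verify that $\bar x$ is a local minimizer of $\Theta$ over $S$. Indeed $\Theta(\bar x)=0$, and if some $x\in S$ close to $\bar x$ satisfied $\Theta(x)<0$, then $\phi_i^L(x)<\phi_i^L(\bar x)$ and $\phi_i^U(x)<\phi_i^U(\bar x)$ for every $i\in I$, while $h_j(x)<0$ for $j\in J(\bar x)$; the inactive constraints remain strictly negative near $\bar x$ by continuity, so $x\in\Omega$ and $F_i(x)<^s_{LU}F_i(\bar x)$ for all $i\in I$, contradicting $\bar x\in\mathcal{S}_2^{w}\eqref{problem}$.

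Next I would apply the generalized Fermat rule. Since $\bar x$ is a local minimizer of $\Theta+\delta_S$ over $\mathbb{R}^n$, where $\delta_S$ is the indicator of the closed set $S$, Proposition~\ref{Fermat-rule} gives $0\in\partial(\Theta+\delta_S)(\bar x)$. The sum rule of Proposition~\ref{sum-rule}, applied with the Lipschitzian $\Theta$ and the l.s.c.\ $\delta_S$ (whose subdifferential is $N(\bar x;S)$), yields $0\in\partial\Theta(\bar x)+N(\bar x;S)$. Because $\Theta$ is a maximum of finitely many locally Lipschitzian functions all vanishing at $\bar x$ (the endpoint differences vanish by construction, and $h_j(\bar x)=0$ for $j\in J(\bar x)$), the set $\Lambda(\bar x)$ of Proposition~\ref{max-rule} imposes no support restriction, so the max rule furnishes multipliers $\lambda_i^L,\lambda_i^U\geq 0$ and $\mu_j\geq 0$ ($j\in J(\bar x)$) with $\sum_{i\in I}(\lambda_i^L+\lambda_i^U)+\sum_{j\in J(\bar x)}\mu_j=1$ such that
\begin{equation*}
	0\in\partial\Bigl(\sum_{i\in I}\lambda_i^L\phi_i^L+\sum_{i\in I}\lambda_i^U\phi_i^U+\sum_{j\in J(\bar x)}\mu_j h_j\Bigr)(\bar x)+N(\bar x;S).
\end{equation*}
Setting $\mu_j:=0$ for $j\notin J(\bar x)$ extends the sum over all of $J$ and delivers the complementarity $\mu_j h_j(\bar x)=0$, $j\in J$, while preserving the normalization.

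It remains to unpack the subdifferential of the weighted sum. Applying Proposition~\ref{sum-rule} once more splits it into $\sum_{i}\lambda_i^L\partial\phi_i^L(\bar x)+\sum_i\lambda_i^U\partial\phi_i^U(\bar x)+\sum_j\mu_j\partial h_j(\bar x)$, and on each quotient $\phi_i^L=\frac{f_i^L}{g_i^U}$ I would invoke Proposition~\ref{quotion-rule}. Using that $g_i^U(\bar x)>0$ is a positive constant, so $\partial(g_i^U(\bar x)f_i^L)(\bar x)=g_i^U(\bar x)\partial f_i^L(\bar x)$, and that $-f_i^L(\bar x)\leq 0$, so by the definition $\partial^+\varphi=-\partial(-\varphi)$ one has $\partial(-f_i^L(\bar x)g_i^U)(\bar x)=-f_i^L(\bar x)\partial^+g_i^U(\bar x)$, I obtain
\begin{equation*}
	\partial\phi_i^L(\bar x)\subset\frac{1}{g_i^U(\bar x)}\Bigl[\partial f_i^L(\bar x)-\frac{f_i^L(\bar x)}{g_i^U(\bar x)}\partial^+g_i^U(\bar x)\Bigr],
\end{equation*}
and the symmetric computation for $\phi_i^U=\frac{f_i^U}{g_i^L}$, using $f_i^U(\bar x)\geq f_i^L(\bar x)\geq 0$, gives the analogous bound with $\partial^+g_i^L$. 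Substituting these into the previous inclusion produces exactly \eqref{Necessary-conditions}. The step I expect to demand the most care is this sign bookkeeping in the quotient rule: the standing hypotheses $f_i^L\geq 0$ on $S$ and $0<g_i^L\leq g_i^U$ are precisely what force the denominator contributions to enter through the \emph{upper} subdifferentials $\partial^+g_i^U,\partial^+g_i^L$ rather than $\partial g_i^U,\partial g_i^L$, and getting that translation right is the crux of matching the stated formula.
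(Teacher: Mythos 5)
Your proposal is correct and follows essentially the same route as the paper's proof: scalarize via a max function built from the endpoint quotients $\frac{f_i^L}{g_i^U}$, $\frac{f_i^U}{g_i^L}$ and the constraints, apply the Fermat rule to the sum with $\delta(\cdot;S)$, then the sum rule, the max rule, and finally the quotient rule with exactly the sign bookkeeping ($f_i^L(\bar x)\geq 0$, $g_i^U(\bar x)>0$, $\partial(-g)= -\partial^+ g$) you identify as the crux. The only cosmetic difference is that you build the max over the active constraints $J(\bar x)$ and argue local minimality (recovering complementarity by setting $\mu_j=0$ for inactive $j$), whereas the paper includes all $h_j$, $j\in J$, proves $\bar x$ is a global minimizer over $S$, and extracts $\mu_j h_j(\bar x)=0$ directly from the set $\Lambda(\bar x)$ in the max rule.
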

\begin{proof} Since $\bar x\in \mathcal{S}_2^{w}\eqref{problem}$, there is no $x\in\Omega$ such that $F_i(x)<^s_{LU} F_i(\bar x)$, $\forall i\in I$, i.e., 
	\begin{equation*}
		\frac{f_i^L(x)}{g_i^U(x)}< \frac{f_i^L(\bar x)}{g_i^U(\bar x)}\ \ \text{and}\ \ \frac{f_i^U(x)}{g_i^L(x)}< \frac{f_i^U(\bar x)}{g_i^L(\bar x)}, \ \ \forall i\in I.  
	\end{equation*}	 
	Hence for each $x\in\Omega$, there exists $i\in I$ such that	
	\begin{equation}\label{equ-1}
		\frac{f_i^L(x)}{g_i^U(x)}\geq \frac{f_i^L(\bar x)}{g_i^U(\bar x)}\ \ \text{or}\ \ \frac{f_i^U(x)}{g_i^L(x)}\geq \frac{f_i^U(\bar x)}{g_i^L(\bar x)}.   
	\end{equation}	 
	Let $\varphi$  be a real-valued function defined by
	$$\varphi(x):=\max_{i\in I, j\in J}\left\{\frac{f_i^L(x)}{g_i^U(x)}-\frac{f_i^L(\bar x)}{g_i^U(\bar x)}, \frac{f_i^U(x)}{g_i^L(x)}- \frac{f_i^U(\bar x)}{g_i^L(\bar x)}, h_j(x)\right\},\ \ \forall x\in\mathbb{R}^n.$$
	By \eqref{equ-1}, we have 
	$$0=\varphi(\bar x)\leq \varphi(x), \ \ \forall x\in \Omega.$$
	If $x\in S\setminus\Omega$, then there exists $j_0\in J$ such that $h_{j_0}(x)>0$ and so $\varphi(x)>0$.  This implies that
	$$0=\varphi(\bar x)\leq \varphi(x), \ \ \forall x\in S,$$ 
	or, equivalently, $\bar x$ is a minimizer to the following unconstrained optimization problem 
	\begin{equation*}
		\text{minimizer}\ \ \varphi(x)+\delta(x; S),\ \ x\in\mathbb{R}^n,
	\end{equation*} 
	where $\delta(\cdot,; S)$ is the indicator function of $\Omega$ and defined by 
	\begin{equation*}
		\delta(x; S)=
		\begin{cases}
			0, &\ \ \text{if}\ \ x\in S,
			\\
			+\infty, &\ \ \text{otherwise}.
		\end{cases}
	\end{equation*}
	By Proposition \ref{Fermat-rule}, we have
	\begin{equation*}
		0\in \partial (\varphi+\delta(\cdot\,; S))(\bar x).
	\end{equation*}
	Clearly, $\varphi$ is   locally Lipschitzian around $\bar x$ and $\delta(\cdot\,; S)$ is lower semicontinuous around this point. Hence by Proposition \ref{sum-rule} and the fact that $\partial \delta(\cdot\,; S)(\bar x)=N(\bar x; S)$ (see  e.g., \cite[Proposition 1.19]{mor06}), we obtain 
	\begin{equation}\label{equ:2}
		0\in \partial \varphi(\bar x)+N(\bar x; S).
	\end{equation}   
	By Proposition \ref{max-rule}, we have
	\begin{align}
		\partial \varphi(\bar x) &\subset \Bigg\{\sum_{i\in I}\lambda_i^L \partial\left(\frac{f_i^L}{g_i^U}\right) (\bar x)+\sum_{i\in I}\lambda_i^U \partial \left(\frac{f_i^U}{g_i^L}\right) (\bar x)+\sum_{j\in J}\mu_j\partial h_j(\bar x)\;:\; \lambda_i^L, \lambda_i^U\geq 0, i\in I,\notag 
		\\
		&\ \ \ \ \ \ \ \ \ \ \ \ \ \ \ \ \ \ \ \ \ \mu_j\geq 0, j\in J, \sum_{i\in I}(\lambda_i^L+\lambda_i^U)+\sum_{j\in J}\mu_j=1, \mu_jh_j(\bar x)=0, j\in J\Bigg\}.\label{equa-4}
	\end{align}
	Now, taking Proposition \ref{quotion-rule} into account, we arrive at
	\begin{align}
		\partial\left(\frac{f_i^L}{g_i^U}\right) (\bar x)&\subset \frac{\partial (g_i^U(\bar x)f_i^L)(\bar x)+\partial(-f_i^L(\bar x)g_i^U)(\bar x)}{[g_i^U(\bar x)]^2}\notag
		\\
		&=\frac{g_i^U(\bar x)\partial f_i^L(\bar x)+f_i^L(\bar x)\partial(-g_i^U)(\bar x)}{[g_i^U(\bar x)]^2},  \notag
		\\
		&=\frac{g_i^U(\bar x)\partial f_i^L(\bar x)-f_i^L(\bar x)\partial^+g_i^U(\bar x)}{[g_i^U(\bar x)]^2}, \ \ \forall i\in I,\label{equa-5}
	\end{align}
	where the last equalities hold due to the fact that $f_i^L(\bar x)\geq 0$, $g_i^U(\bar x)>0$, and 
	$$\partial(-g_i^U)(\bar x)=-\partial^+g_i^U(\bar x), \ \ \forall i\in I.$$
Similarly, we have
	\begin{equation}\label{equa-6}
		\partial\left(\frac{f_i^U}{g_i^L}\right) (\bar x)\subset \frac{g_i^L(\bar x)\partial f_i^U(\bar x)-f_i^U(\bar x)\partial^+ g_i^L(\bar x)}{[g_i^L(\bar x)]^2}, \ \ \forall i\in I.
	\end{equation}
	It follows from \eqref{equ:2}--\eqref{equa-6} that
	\begin{align*}
		0\in \Bigg\{\sum_{i\in I}\frac{\lambda_i^L}{g_i^U(\bar x)} &\left[\partial f_i^L(\bar x)-\frac{f_i^L(\bar x)}{g_i^U(\bar x)}\partial^+ g_i^U(\bar x)\right]+\sum_{i\in I}\frac{\lambda_i^U}{g_i^L(\bar x)} \left[\partial f_i^U(\bar x)-\frac{f_i^U(\bar x)}{g_i^L(\bar x)}\partial^+ g_i^L(\bar x)\right]
		\\
		&+\sum_{j\in J}\mu_j\partial h_j(\bar x)\,:\lambda_i^L, \lambda_i^U\geq 0, i\in I, \mu_j\geq 0, j\in J, \notag 
		\\
		&\ \ \ \ \sum_{i\in I}(\lambda_i^L+\lambda_i^U)+\sum_{j\in J}\mu_j=1, \mu_jh_j(\bar x)=0, j\in J\Bigg\}+N(\bar x; S).
	\end{align*}
	In other words, there exist $\lambda_i^L\geq 0,$ $\lambda_i^U\geq 0$, $i\in I$, and $\mu_j\geq 0$, $j\in J$, with $\sum_{i\in I}(\lambda_i^L+\lambda_i^U)+\sum_{j\in J}\mu_j=1$ satisfying \eqref{Necessary-conditions}. The proof is complete. 
\end{proof}

The relation obtained in \eqref{Necessary-conditions} suggests us to define a  Karush--Kuhn--Tucker (KKT) type condition when dealing with Pareto solutions of problem \eqref{problem}.

\begin{definition}
	{\rm Let $\bar x\in\Omega$. We say that $\bar x$ satisfies the {\em  KKT condition} if \eqref{Necessary-conditions} holds with $\lambda_i^L\geq 0$,  $\lambda_i^U\geq 0$, $i\in I$, and $\mu_j\geq 0$, $j\in J$ such that $\sum_{i\in I}(\lambda_i^L+\lambda_i^U)+\sum_{j\in J}\mu_j=1$ and  $(\lambda^L,\lambda^U)\neq (0,0)$, where $\lambda^L:=(\lambda_1^L, \ldots, \lambda_m^L)$ and  $\lambda^U:=(\lambda_1^U, \ldots, \lambda_m^U)$. 
	}
\end{definition}

In order to obtain  optimality conditions of KKT-type for Pareto solutions of problem \eqref{problem}, we use the following well known constraint qualification.

\begin{definition}[see \cite{Chuong-Kim-16}]{\rm 
		Let $\bar x\in \Omega$. We say that the {\em constraint qualification}  \eqref{CQ} is satisfied at  $\bar x$ if there do not exist $\mu_j\geq 0$, $j\in J(\bar x)$ not all zero, such that
		\begin{equation}\label{CQ}\tag{CQ}
			0\in\sum_{j\in J(\bar x)}\mu_j\partial h_j(\bar x)+N(\bar x; S),
		\end{equation}
		where $J(\bar x):=\{j\in J\;:\; g_j(\bar x)=0\}$.}
\end{definition}

It is worth to mentioning here that the above \eqref{CQ} reduces to the classical Mangasarian--Fromovitz constraint qualification when the functions $h_1, \ldots, h_p$ are strictly differentiable at such $\bar x$ and $S=\mathbb{R}^n$; see e.g., \cite{mor06-2,Chuong-09}.
\begin{theorem}\label{KKT-Theorem}
	If $\bar x\in \mathcal{S}_2^{w}\eqref{problem}$ and the \eqref{CQ}  holds at $\bar x$, then $\bar x$ satisfies the KKT condition.    
\end{theorem}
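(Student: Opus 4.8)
The plan is to derive the KKT condition directly from the Fritz--John type condition established in Theorem~\ref{Necessary-Theorem} by ruling out the degenerate case in which all the objective multipliers vanish. Since $\bar x\in\mathcal{S}_2^{w}\eqref{problem}$, Theorem~\ref{Necessary-Theorem} furnishes multipliers $\lambda_i^L\geq 0$, $\lambda_i^U\geq 0$, $i\in I$, and $\mu_j\geq 0$, $j\in J$, obeying the normalization $\sum_{i\in I}(\lambda_i^L+\lambda_i^U)+\sum_{j\in J}\mu_j=1$, the inclusion \eqref{Necessary-conditions}, and the complementarity relations $\mu_jh_j(\bar x)=0$, $j\in J$. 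All the defining requirements of the KKT condition are thus already in hand except for the nontriviality of the objective multipliers; so it suffices to prove that $(\lambda^L,\lambda^U)\neq(0,0)$.

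First I would argue by contradiction and suppose that $(\lambda^L,\lambda^U)=(0,0)$, that is, $\lambda_i^L=\lambda_i^U=0$ for every $i\in I$. Then the normalization condition forces $\sum_{j\in J}\mu_j=1$, so that the multipliers $\mu_j$ are not all zero. At the same time, with every objective multiplier equal to zero the two objective sums in \eqref{Necessary-conditions} disappear and the inclusion reduces to
\begin{equation*}
	0\in\sum_{j\in J}\mu_j\partial h_j(\bar x)+N(\bar x; S).
\end{equation*}

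Next I would invoke the complementarity relations to shrink this sum to the active constraints. Because $\mu_j\geq 0$ and $\mu_jh_j(\bar x)=0$ for each $j\in J$, any index $j$ with $h_j(\bar x)<0$ must have $\mu_j=0$; hence $\mu_j=0$ whenever $j\notin J(\bar x)$, and the preceding inclusion becomes
\begin{equation*}
	0\in\sum_{j\in J(\bar x)}\mu_j\partial h_j(\bar x)+N(\bar x; S),
\end{equation*}
where the nonnegative multipliers $\mu_j$, $j\in J(\bar x)$, are not all zero. This is precisely the configuration forbidden by the constraint qualification \eqref{CQ} assumed at $\bar x$, a contradiction. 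Therefore $(\lambda^L,\lambda^U)\neq(0,0)$, while \eqref{Necessary-conditions} and the complementarity relations are carried over unchanged from Theorem~\ref{Necessary-Theorem}; consequently $\bar x$ satisfies the KKT condition.

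I expect the only delicate point to be the bookkeeping around the active index set and complementary slackness: one must check that, after all the $\lambda$'s are set to zero, the surviving $\mu_j$ are genuinely nonzero (secured by the normalization) and supported on $J(\bar x)$ (secured by complementarity), so that they indeed contradict \eqref{CQ}. Beyond this, no additional machinery is needed --- the argument is the standard passage from a Fritz--John condition to a KKT condition under a constraint qualification, and all the analytic content has already been absorbed into Theorem~\ref{Necessary-Theorem}.
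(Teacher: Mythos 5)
Your proof is correct and follows essentially the same route as the paper: invoke Theorem~\ref{Necessary-Theorem} to get Fritz--John multipliers, suppose $(\lambda^L,\lambda^U)=(0,0)$, and use \eqref{CQ} to force a contradiction with the normalization $\sum_{i\in I}(\lambda_i^L+\lambda_i^U)+\sum_{j\in J}\mu_j=1$. The only difference is cosmetic: you spell out the complementarity step that reduces the sum to the active set $J(\bar x)$ before applying \eqref{CQ}, a detail the paper leaves implicit.
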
   
\begin{proof} Assume that $\bar x\in \mathcal{S}_2^{w}\eqref{problem}$ and the \eqref{CQ} holds at $\bar x$.  Then by Theorem \ref{Necessary-Theorem}, there exist $\lambda_i^L\geq 0$,  $\lambda_i^U\geq 0$, $i\in I$, and $\mu_j\geq 0$, $j\in J$ with $\sum_{i\in I}(\lambda_i^L+\lambda_i^U)+\sum_{j\in J}\mu_j=1$ satisfying \eqref{Necessary-conditions}. If $(\lambda^L,\lambda^U)= (0,0)$, then $0\in \sum_{j\in J}\mu_j\partial h_j(\bar x)+N(\bar x; S)$ and $\mu_j h_j(\bar x)=0$ for all $j\in J$. Hence, by the condition \eqref{CQ}, $\mu_j=0$ for all $j\in J$. This contradicts to the fact that 
$$\sum_{i\in I}(\lambda_i^L+\lambda_i^U)+\sum_{j\in J}\mu_j=1.$$
Therefore, $(\lambda^L,\lambda^U)\neq (0,0)$.  The proof is complete.
\end{proof}

{The following  example shows that the conclusion of Theorem \ref{KKT-Theorem} may fail if the \eqref{CQ} is	not satisfied.}
\begin{example}\label{Not-CQ} We consider problem \eqref{problem} with $m=2,$ $n=p=1$, $S=(-\infty, 0]$, $h(x)=x^2$, $f^L_1(x)=f_1^U(x)=-x+2$, $f^L_2(x)=f_2^U(x)=-x+3$, $g_1^L(x)=g_2^L(x)=-2x+1$, and $g_1^U(x)=g_2^U(x)=-2x+2$. Then
\begin{equation*}
F_1(x)=\left[\frac{-x+2}{-2x+ 2}, \frac{-x+2}{-2x+1}\right],  F_2(x)=\left[\frac{-x+3}{-2x+ 2}, \frac{-x+3}{-2x+1}\right],
\end{equation*} 
and $\Omega=\{0\}$. Clearly, $\bar x=0\in \mathcal{S}_2^w\eqref{problem}$, $\nabla h(\bar x)=0$, and $N(\bar x, S)=[0, +\infty)$. By Theorem \ref{Necessary-Theorem}, there exist $\lambda_i^L\geq 0$, $\lambda_i^U\geq 0$, $i\in I=\{1, 2\}$, $\mu\geq 0$ with $\sum_{i\in I}(\lambda_i^L+\lambda_i^U)+\mu=1$ satisfying \eqref{Necessary-conditions}, i.e.,
\begin{equation*}
0\in \frac{\lambda_1^L}{2}+\lambda_2^L+3\lambda_1^U+5\lambda_2^U +[0, +\infty).
\end{equation*}
This implies that $\lambda_1^L=\lambda_2^L=\lambda_1^U=\lambda_2^U=0$ and so the KKT condition do not hold at $\bar x$. Actually, the \eqref{CQ} fails to hold at $\bar x$. 
\end{example}

Next we present sufficient conditions for Pareto solutions of \eqref{problem}.  In order to obtain these sufficient conditions, we need to introduce concepts of (strictly) generalized convexity  at a given point for a family of locally Lipschitzian functions. The following definition is motivated from  \cite{Chuong-16}.

\begin{definition}
	{\rm 
		\begin{enumerate}[(i)]
			\item  We say that $(F, h)$ is {\em generalized  convex on $S$} at $\bar x\in S$ if for any $x\in S$, $x_i^{*L}\in\partial f_i^L(\bar x)$,  $x_i^{*U}\in\partial f_i^U(\bar x)$, $y_i^{*L}\in\partial^+ g_i^L(\bar x)$,  $y_i^{*U}\in\partial^+ g_i^U(\bar x)$,  $i\in I$, and $z^*_j\in\partial h_j(\bar x)$, $j\in J$, there exists $\nu\in [N(\bar x; S)]^\circ$ satisfying
			\begin{equation*}  
				\begin{split}
					&f_i^L(x) -f_i^L(\bar x)\geq \langle x_i^{*L}, \nu\rangle, \ \ \forall  i\in I,
					\\
					&f_i^U(x) -f_i^U(\bar x)\geq \langle x_i^{*U}, \nu\rangle, \ \ \forall  i\in I,
					\\
					&g_i^L(x) -g_i^L(\bar x)\leq \langle y_i^{*L}, \nu\rangle, \ \ \forall  i\in I,
					\\
					&g_i^U(x) -g_i^U(\bar x)\leq \langle y_i^{*U}, \nu\rangle, \ \ \forall  i\in I,
					\\
					&h_j(x)- h_j(\bar x) \geq \langle z^*_j, \nu\rangle,\ \ \forall j\in J.
				\end{split}
			\end{equation*}
			\item  	  
			We say that $(F, h)$ is {\em strictly generalized  convex on $S$} at $\bar x\in S$ if for any $x\in S\setminus\{\bar x\}$, $x_i^{*L}\in\partial f_i^L(\bar x)$,  $x_i^{*U}\in\partial f_i^U(\bar x)$, $y_i^{*L}\in\partial^+ g_i^L(\bar x)$,  $y_i^{*U}\in\partial^+ g_i^U(\bar x)$,  $i\in I$, and $z^*_j\in\partial h_j(\bar x)$, $j\in J$, there exists $\nu\in [N(\bar x; S)]^\circ$ satisfying
			\begin{equation*} 
				\begin{split}
					&f_i^L(x) -f_i^L(\bar x)> \langle x_i^{*L}, \nu\rangle, \ \ \forall  i\in I,
					\\
					&f_i^U(x) -f_i^U(\bar x)> \langle x_i^{*U}, \nu\rangle, \ \ \forall  i\in I,
					\\
					&g_i^L(x) -g_i^L(\bar x)\leq \langle y_i^{*L}, \nu\rangle, \ \ \forall  i\in I,
					\\
					&g_i^U(x) -g_i^U(\bar x)\leq \langle y_i^{*U}, \nu\rangle, \ \ \forall  i\in I,
					\\
					&h_j(x)- h_j(\bar x) \geq \langle z^*_j, \nu\rangle,\ \ \forall j\in J.
				\end{split}
			\end{equation*}
		\end{enumerate}
	}
\end{definition}
\begin{remark}
	{\rm 
		We see that if $S$ is convex and $f^L_i$, $f^U_i$, $-g^L_i$, $-g^U_i$, $i\in I$, and $h_j$, $j\in J$, are convex, then $(F, h)$ is generalized convex  on $S$ at any $\bar x\in S$ with $\nu=x-\bar x$. Moreover, the class of generalized convex functions is properly larger than the one of convex functions; see, e.g., \cite[Example 3.2]{Chuong-14} and \cite[Example 3.12]{Chuong-Kim-16}.  
	}
\end{remark}

\begin{theorem}\label{Sufficient-Theorem} Let $\bar x\in\Omega$ satisfy the KKT condition. 
	\begin{enumerate}[\rm(i)]
		\item If $(F, h)$ is generalized convex on $S$ at $\bar x$, then $\bar x\in\mathcal{S}_2^{w}\eqref{problem}$.
		
		\item If $(F, h)$ is strictly  generalized convex on $S$ at $\bar x$, then $\bar x\in \mathcal{S}_1\eqref{problem}$ and so $\bar x\in\mathcal{S}_2\eqref{problem}$ and $\bar x\in\mathcal{S}_1^{w}\eqref{problem}$.  
	\end{enumerate}
\end{theorem}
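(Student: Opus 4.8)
The plan is to prove both parts by contradiction, extracting from the KKT inclusion \eqref{Necessary-conditions} an explicit subgradient identity and pairing it with the test vector $\nu$ supplied by (strict) generalized convexity. First I would unpack the KKT condition: the membership $0\in\cdots$ means there exist $x_i^{*L}\in\partial f_i^L(\bar x)$, $x_i^{*U}\in\partial f_i^U(\bar x)$, $y_i^{*L}\in\partial^+ g_i^L(\bar x)$, $y_i^{*U}\in\partial^+ g_i^U(\bar x)$, $z_j^*\in\partial h_j(\bar x)$ and some $\omega\in N(\bar x;S)$ with
\[0=\sum_{i\in I}\frac{\lambda_i^L}{g_i^U(\bar x)}\Bigl[x_i^{*L}-\frac{f_i^L(\bar x)}{g_i^U(\bar x)}y_i^{*U}\Bigr]+\sum_{i\in I}\frac{\lambda_i^U}{g_i^L(\bar x)}\Bigl[x_i^{*U}-\frac{f_i^U(\bar x)}{g_i^L(\bar x)}y_i^{*L}\Bigr]+\sum_{j\in J}\mu_j z_j^*+\omega.\]

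For part (i) I would suppose $\bar x\notin\mathcal{S}_2^{w}\eqref{problem}$, so there is $x\in\Omega$ with $F_i(x)<^s_{LU}F_i(\bar x)$ for all $i\in I$; clearing the positive denominators $g_i^U(x),g_i^L(x)>0$, this reads $f_i^L(x)-\frac{f_i^L(\bar x)}{g_i^U(\bar x)}g_i^U(x)<0$ and $f_i^U(x)-\frac{f_i^U(\bar x)}{g_i^L(\bar x)}g_i^L(x)<0$ for every $i$. Feeding this $x$ and the above subgradients into the definition of generalized convexity yields a single $\nu\in[N(\bar x;S)]^\circ$ satisfying the five families of inequalities. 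Pairing the displayed identity with $\nu$ and using $\langle\omega,\nu\rangle\le 0$ (because $\omega\in N(\bar x;S)$ and $\nu\in[N(\bar x;S)]^\circ$), I obtain that the weighted sum of the brackets $\langle x_i^{*L},\nu\rangle-\frac{f_i^L(\bar x)}{g_i^U(\bar x)}\langle y_i^{*U},\nu\rangle$, their $U$-analogues, and $\sum_j\mu_j\langle z_j^*,\nu\rangle$ is nonnegative.

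The crux is then an inequality-chasing step bounding each bracket from above. Using $\langle x_i^{*L},\nu\rangle\le f_i^L(x)-f_i^L(\bar x)$ together with $\langle y_i^{*U},\nu\rangle\ge g_i^U(x)-g_i^U(\bar x)$ and the nonnegativity of $\frac{f_i^L(\bar x)}{g_i^U(\bar x)}$, the constant terms cancel and the bracket collapses to $f_i^L(x)-\frac{f_i^L(\bar x)}{g_i^U(\bar x)}g_i^U(x)$, which is negative; the $U$-bracket is identical, and by the complementary slackness $\mu_j h_j(\bar x)=0$ with feasibility $h_j(x)\le 0$ one gets $\mu_j\langle z_j^*,\nu\rangle\le\mu_j h_j(x)\le 0$. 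Since $(\lambda^L,\lambda^U)\ne(0,0)$, at least one coefficient $\frac{\lambda_i^L}{g_i^U(\bar x)}$ or $\frac{\lambda_i^U}{g_i^L(\bar x)}$ is strictly positive and multiplies a strictly negative bracket, so the weighted sum is strictly negative, contradicting its nonnegativity. I expect the bookkeeping of inequality directions to be the only genuinely delicate point: the $g$-inequalities point the opposite way to the $f$-inequalities, and one must check that the nonnegative factor $f/g$ preserves the sign when it multiplies the $y^*$-terms.

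Part (ii) runs on the same template with two adjustments. Assuming $\bar x\notin\mathcal{S}_1\eqref{problem}$ yields $x\in\Omega$ with $F_i(x)\le_{LU}F_i(\bar x)$ for all $i$ and $F_k(x)<_{LU}F_k(\bar x)$ for some $k$; the latter forces $x\ne\bar x$, so $x\in S\setminus\{\bar x\}$ and strict generalized convexity applies. Now the $\le_{LU}$ relations give only $f_i^L(x)-\frac{f_i^L(\bar x)}{g_i^U(\bar x)}g_i^U(x)\le 0$ (and likewise for $U$), so the brackets are merely nonpositive; but the strict inequalities $f_i^L(x)-f_i^L(\bar x)>\langle x_i^{*L},\nu\rangle$ from strict generalized convexity make each bracket carrying a positive weight strictly below its nonpositive bound, hence strictly negative. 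As before $(\lambda^L,\lambda^U)\ne(0,0)$ activates at least one such strict term, the weighted sum is again strictly negative, and the contradiction gives $\bar x\in\mathcal{S}_1\eqref{problem}$; the memberships $\bar x\in\mathcal{S}_2\eqref{problem}$ and $\bar x\in\mathcal{S}_1^{w}\eqref{problem}$ then follow at once from Remark \ref{Remark-3.1}.
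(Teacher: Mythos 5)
Your proposal is correct and follows essentially the same route as the paper's proof: unpack the KKT inclusion into an explicit subgradient identity, invoke (strict) generalized convexity at the hypothesized better point to obtain $\nu\in[N(\bar x;S)]^\circ$, use $\langle\omega^*,\nu\rangle\le 0$ together with the cancellation $f_i^L(x)-f_i^L(\bar x)-\frac{f_i^L(\bar x)}{g_i^U(\bar x)}\bigl(g_i^U(x)-g_i^U(\bar x)\bigr)=f_i^L(x)-\frac{f_i^L(\bar x)}{g_i^U(\bar x)}g_i^U(x)$, handle the constraint terms via complementary slackness and feasibility, and derive a sign contradiction from $(\lambda^L,\lambda^U)\neq(0,0)$. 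The only cosmetic difference is that you conclude by making the whole weighted sum strictly negative, whereas the paper isolates one index $i_0$ with a nonnegative (resp.\ positive) bracket and contradicts the ratio inequalities; these are the same argument in a different order.
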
	
\begin{proof} Since $\bar x$ satisfies the KKT condition, there exist $(\lambda^L,\lambda^U)\in (\mathbb{R}^m_+\times\mathbb{R}^m_+)\setminus\{(0,0)\}$, $\mu_j\geq 0$, $j\in J$, and  $x_i^{*L}\in \partial f_i^L(\bar x)$, $x_i^{*U}\in \partial f_i^U(\bar x)$, $y_i^{*L}\in\partial^+ g_i^L(\bar x)$, $y_i^{*U}\in\partial^+ g_i^U(\bar x)$, $i\in I$, $z_j^*\in \partial h_j(\bar x)$, $j\in J$, and $\omega^*\in N(\bar x; S)$ such that $\mu_jh_j(\bar x)=0, j\in J$ and
	\begin{equation*}
		\sum_{i\in I}\frac{\lambda_i^L}{g_i^U(\bar x)} \left[x_i^{*L}-\frac{f_i^L(\bar x)}{g_i^U(\bar x)}y_i^{*U}\right]+\sum_{i\in I}\frac{\lambda_i^U}{g_i^L(\bar x)} \left[x_i^{*U}-\frac{f_i^U(\bar x)}{g_i^L(\bar x)}y_i^{*L}\right]+\sum_{j\in J}\mu_j z_j^*+\omega^*=0,
	\end{equation*}	
	or, equivalently,
	\begin{equation}\label{equ:5}
		\sum_{i\in I}\frac{\lambda_i^L}{g_i^U(\bar x)} \left[x_i^{*L}-\frac{f_i^L(\bar x)}{g_i^U(\bar x)}y_i^{*U}\right]+\sum_{i\in I}\frac{\lambda_i^U}{g_i^L(\bar x)} \left[x_i^{*U}-\frac{f_i^U(\bar x)}{g_i^L(\bar x)}y_i^{*L}\right]+\sum_{j\in J}\mu_j z_j^*=-\omega^*.
	\end{equation}
	First, we prove (i). Assume on the contrary that $\bar x\notin \mathcal{S}_2^w$. This means that there exists $\hat x\in\Omega$ such that $F_i(\hat x)<^s_{LU} F_i(\bar x)$, $\forall i\in I$, or, equivalently, 
	\begin{equation}\label{equ:4}
		\frac{f_i^L(\hat x)}{g_i^U(\hat x)}< \frac{f_i^L(\bar x)}{g_i^U(\bar x)}\ \ \text{and}\ \ \frac{f_i^U(\hat x)}{g_i^L(\hat x)}< \frac{f_i^U(\bar x)}{g_i^L(\bar x)}, \ \ \forall i\in I.  
	\end{equation} 
	By the generalized convexity of $(F, h)$ at $\bar x$, for such $\hat x$, there is $\nu\in [N(\bar x; S)]^\circ$  such that
	\begin{equation*} 
		\begin{aligned}
			&\sum_{i\in I}\frac{\lambda_i^L}{g_i^U(\bar x)} \left[\langle x_i^{*L},\nu\rangle-\frac{f_i^L(\bar x)}{g_i^U(\bar x)}\langle y_i^{*U},\nu\rangle \right]+\sum_{i\in I}\frac{\lambda_i^U}{g_i^L(\bar x)} \left[\langle x_i^{*U},\nu\rangle-\frac{f_i^U(\bar x)}{g_i^L(\bar x)}\langle y_i^{*L},\nu\rangle\right]+\sum_{j\in J}\mu_j \langle z_j^*,\nu\rangle
			\\
			&\leq \sum_{i\in I}\frac{\lambda_i^L}{g_i^U(\bar x)} \left[f_i^L(\hat x)-f_i^L(\bar x)-\frac{f_i^L(\bar x)}{g_i^U(\bar x)}(g_i^U(\hat x)-g_i^U(\bar x)) \right]
			\\
			&\ \ \ \ +\sum_{i\in I}\frac{\lambda_i^U}{g_i^L(\bar x)} \left[f_i^U(\hat x)-f_i^U(\bar x)-\frac{f_i^U(\bar x)}{g_i^L(\bar x)}(g_i^L(\hat x)-g_i^L(\bar x))\right]+\sum_{j\in J}\mu_j (h_j(\hat x)-h_j(\bar x))
			\\
			&=\sum_{i\in I}\frac{\lambda_i^L}{g_i^U(\bar x)} \left[f_i^L(\hat x)-\frac{f_i^L(\bar x)}{g_i^U(\bar x)}g_i^U(\hat x) \right]+\sum_{i\in I}\frac{\lambda_i^U}{g_i^L(\bar x)} \left[f_i^U(\hat x)-\frac{f_i^U(\bar x)}{g_i^L(\bar x)}g_i^L(\hat x)\right]
			\\
			&\ \ \ \ \ \ \ \ \ \ \ \ \ \ \ \ \ \ \ \ \ \ \ \ \ \ \ \ \ \ \ \ \ \ \ \ \ \ \ \ \ \ \ \ \ \ \  \ \ \ \ \ \ \ \ \ \ +\sum_{j\in J}\mu_j (h_j(\hat x)-h_j(\bar x)).
		\end{aligned}
	\end{equation*}   
	It follows from \eqref{equ:5} and relations $\omega^*\in N(\bar x; S)$ and $\nu\in [N(\bar x, S)]^\circ$ that
	\begin{align}
		0&\leq \langle -\omega^*,\nu\rangle=\sum_{i\in I}\frac{\lambda_i^L}{g_i^U(\bar x)} \left[\langle x_i^{*L},\nu\rangle-\frac{f_i^L(\bar x)}{g_i^U(\bar x)}\langle y_i^{*U},\nu\rangle \right]+\sum_{i\in I}\frac{\lambda_i^U}{g_i^L(\bar x)} \left[\langle x_i^{*U},\nu\rangle-\frac{f_i^U(\bar x)}{g_i^L(\bar x)}\langle y_i^{*L},\nu\rangle\right]\notag
		\\
		&\ \ \ \ \ \ \ \ \ \ \ \ \ \ \ \ \ \ \ \ \ \ \ \ \ \ \ \ \ \ \ \ \ \ \ \ \ \ \ \ \ \ \ \ \ \ \ \ \ \ \ \ \ \ \ \ \ \ \ \ \ \ \ \ \ \ \  +\sum_{j\in J}\mu_j \langle z_j^*,\nu\rangle\notag
		\\
		&\ \ \ \ \ \ \ \ \ \ \ \ \ \ \ \ \leq \sum_{i\in I}\frac{\lambda_i^L}{g_i^U(\bar x)} \left[f_i^L(\hat x)-\frac{f_i^L(\bar x)}{g_i^U(\bar x)}g_i^U(\hat x) \right]+\sum_{i\in I}\frac{\lambda_i^U}{g_i^L(\bar x)} \left[f_i^U(\hat x)-\frac{f_i^U(\bar x)}{g_i^L(\bar x)}g_i^L(\hat x)\right]\notag
		\\
		&\ \ \ \ \ \ \ \ \ \ \ \ \ \ \ \ \ \ \ \ \ \ \ \ \ \ \ \ \ \ \ \ \ \ \ \ \ \ \ \ \ \ \ \ \ \ \ \ \ \ \ \ \ \ \ \ \ \ \ \ \ \ \ \ \ \ \ \ \ \ \ \ +\sum_{j\in J}\mu_j (h_j(\hat x)-h_j(\bar x)).\label{equa:5}
	\end{align} 
	Furthermore, we see that $\mu_j h_j(\bar x)=0$ and $\mu_j h_j(\hat x)\leq 0$ for all $j\in J$. This and \eqref{equa:5} imply that
	\begin{equation*}
		\sum_{i\in I}\frac{\lambda_i^L}{g_i^U(\bar x)} \left[f_i^L(\hat x)-\frac{f_i^L(\bar x)}{g_i^U(\bar x)}g_i^U(\hat x) \right]+\sum_{i\in I}\frac{\lambda_i^U}{g_i^L(\bar x)} \left[f_i^U(\hat x)-\frac{f_i^U(\bar x)}{g_i^L(\bar x)}g_i^L(\hat x)\right]\geq 0.
	\end{equation*}
	Since $(\lambda^L,\lambda^U)\neq (0,0)$, there exists $i_0\in I$ such that
	\begin{equation*}
		f_{i_0}^L(\hat x)-\frac{f_{i_0}^L(\bar x)}{g_{i_0}^U(\bar x)}g_{i_0}^U(\hat x)\geq 0 \ \ \text{or}\ \ f_{i_0}^U(\hat x)-\frac{f_{i_0}^U(\bar x)}{g_{i_0}^L(\bar x)}g_{i_0}^L(\hat x)\geq 0, 
	\end{equation*}
	or, equivalently,
	\begin{equation*}
		\frac{f_{i_0}^L(\hat x)}{g_{i_0}^U(\hat x)}\geq \frac{f_{i_0}^L(\bar x)}{g_{i_0}^U(\bar x)}  \ \ \text{or}\ \ \frac{f_{i_0}^U(\hat x)}{g_{i_0}^L(\hat x)}\geq\frac{f_{i_0}^U(\bar x)}{g_{i_0}^L(\bar x)}, 
	\end{equation*}
	This together with \eqref{equ:4} gives a contradiction, which completes the proof of (i).
	
	We now prove (ii). Suppose on the contrary that $\bar x\notin \mathcal{S}_1\eqref{problem}$. Then there exists $\hat x\in\Omega$ such that 
	\begin{equation*}
		\begin{cases}
			F_i(\hat x)\leq_{LU} F_i(\bar x),\ \ \forall i\in I,
			\\
			F_k(\hat x)<_{LU} F_k(\bar x),\ \ \text{for at least one}\ \ k\in I.
		\end{cases}
	\end{equation*}
	This implies that  $\hat x\neq \bar x$ and
	\begin{equation} \label{equa-5new}
		\frac{f_i^L(\hat x)}{g_i^U(\hat x)}\leq \frac{f_i^L(\bar x)}{g_i^U(\bar x)}\ \ \text{and}\ \ \frac{f_i^U(\hat x)}{g_i^L(\hat x)}\leq \frac{f_i^U(\bar x)}{g_i^L(\bar x)}, \ \ \forall i\in I,  
	\end{equation} 
	with at least one of the inequalities is strict. Hence, by the strictly generalized convexity of $(F, h)$ at $\bar x$ and the assumption that $(\lambda^L,\lambda^U)\neq (0,0)$, for $\hat x$ above, there exists $\nu\in [N(\bar x; S)]^\circ$ such that
	\begin{align}
		0&\leq \langle -\omega^*,\nu\rangle=\sum_{i\in I}\frac{\lambda_i^L}{g_i^U(\bar x)} \left[\langle x_i^{*L},\nu\rangle-\frac{f_i^L(\bar x)}{g_i^U(\bar x)}\langle y_i^{*U},\nu\rangle \right]+\sum_{i\in I}\frac{\lambda_i^U}{g_i^L(\bar x)} \left[\langle x_i^{*U},\nu\rangle-\frac{f_i^U(\bar x)}{g_i^L(\bar x)}\langle y_i^{*L},\nu\rangle\right]\notag
		\\
		&\ \ \ \ \ \ \ \ \ \ \ \ \ \ \ \ \ \ \ \ \ \ \ \ \ \ \ \ \ \ \ \ \ \ \ \ \ \ \ \ \ \ \ \ \ \ \ \ \ \ \ \ \ \ \ \ \ \ \ \ \ \ \ \ \ \ \ \ \ \ \ \ \ \ \ \ \ \ \ \ \ \ +\sum_{j\in J}\mu_j \langle z_j^*,\nu\rangle\notag
		\\
		&\ \ \ \ \ \ \ \ \ \ \ \ \ \ \ \ < \sum_{i\in I}\frac{\lambda_i^L}{g_i^U(\bar x)} \left[f_i^L(\hat x)-\frac{f_i^L(\bar x)}{g_i^U(\bar x)}g_i^U(\hat x) \right]+\sum_{i\in I}\frac{\lambda_i^U}{g_i^L(\bar x)} \left[f_i^U(\hat x)-\frac{f_i^U(\bar x)}{g_i^L(\bar x)}g_i^L(\hat x)\right]\notag
		\\
		&\ \ \ \ \ \ \ \ \ \ \ \ \ \ \ \ \ \ \ \ \ \ \ \ \ \ \ \ \ \ \ \ \ \ \ \ \ \ \ \ \ \ \ \ \ \ \ \ \ \ \ \ \ \ \ \ \ \ \ \ \ \ \ \ \ \ \ \ \ \ \ \ \ \ \ \ \ \ \ +\sum_{j\in J}\mu_j (h_j(\hat x)-h_j(\bar x))\notag
		\\
		&\ \ \ \ \ \ \ \ \ \ \ \ \ \ \ \ \leq \sum_{i\in I}\frac{\lambda_i^L}{g_i^U(\bar x)} \left[f_i^L(\hat x)-\frac{f_i^L(\bar x)}{g_i^U(\bar x)}g_i^U(\hat x) \right]+\sum_{i\in I}\frac{\lambda_i^U}{g_i^L(\bar x)} \left[f_i^U(\hat x)-\frac{f_i^U(\bar x)}{g_i^L(\bar x)}g_i^L(\hat x)\right].\notag
	\end{align} 
	This  implies that there exists $i_0\in I$ satisfying
	\begin{equation*}
		f_{i_0}^L(\hat x)-\frac{f_{i_0}^L(\bar x)}{g_{i_0}^U(\bar x)}g_{i_0}^U(\hat x)> 0 \ \ \text{or}\ \ f_{i_0}^U(\hat x)-\frac{f_{i_0}^U(\bar x)}{g_{i_0}^L(\bar x)}g_{i_0}^L(\hat x) > 0, 
	\end{equation*}
	or, equivalently,
	\begin{equation*}
		\frac{f_{i_0}^L(\hat x)}{g_{i_0}^U(\hat x)}> \frac{f_{i_0}^L(\bar x)}{g_{i_0}^U(\bar x)}  \ \ \text{or}\ \ \frac{f_{i_0}^U(\hat x)}{g_{i_0}^L(\hat x)}>\frac{f_{i_0}^U(\bar x)}{g_{i_0}^L(\bar x)}. 
	\end{equation*}
	It together with \eqref{equa-5new} gives a contradiction.    The proof is complete. 
\end{proof} 

\begin{remark}{\rm 
The condition \eqref{Necessary-Theorem} alone is not sufficient for Pareto solutions of \eqref{problem} if the (strict) generalized convexity of 	$(F, h)$  at  the point under consideration is violated. To see this, let us consider the following example.
	}
\end{remark}

\begin{example} {\rm 
We consider problem \eqref{problem} with $m=2,$ $n=p=1$, $S=(-\infty, 1]$, $h(x)=-x^2$, $f^L_1(x)=f_1^U(x)=-x^3+1$, $f^L_2(x)=f_2^U(x)=-2x^3+3$, $g_1^L(x)=g_2^L(x)=x^2+1$, and $g_1^U(x)=g_2^U(x)=x^2+2$. Then
\begin{equation*}
	F_1(x)=\left[\frac{-x^3+1}{x^2+2}, \frac{-x^3+1}{x^2+1}\right],  F_2(x)=\left[\frac{-2x^3+3}{x^2+2}, \frac{-2x^3+3}{x^2+1}\right],
\end{equation*} 
and $\Omega=S$. Let $\bar x=0\in \Omega$. Then, we have $N(\bar x; S)=\{0\}$ and 
$$\partial f_i^L(\bar x)=\partial f_i^U(\bar x)=\partial^+ g_i^L(\bar x)=\partial^+ g_i^U(\bar x)=\partial h(\bar x)=\{0\}, i\in \{1, 2\}.$$
Thus, $\bar x$ satisfies the KKT condition. However, since $\frac{1}{2}\in S$ and 
$$F\left(\frac{1}{2}\right)=\left(\left[\frac{7}{18},\frac{7}{10}\right], \left[\frac{11}{9},\frac{11}{5}\right]\right)<^s_{LU} F(\bar x)=\left(\left[\frac{1}{2},1\right], \left[\frac{3}{2},3\right]\right),$$ 
we arrive at $\bar x\notin \mathcal{S}_2^w\eqref{problem}$. 	Meanwhile, it is easy to check that $(F, h)$ is not generalized convex at $\bar x$.} 
\end{example} 
\section{Approximate duality theorems}\label{Duality-Relations}
Let $\mathcal{A}:=(A_1, \ldots, A_m)$ and $\mathcal{B}:=(B_1, \ldots, B_m)$, where $A_i$, $B_i$, $i\in I$, are intervals in $\mathcal{K}_c$. In what follows,   we use the following notations for convenience.
\begin{align*}
	\mathcal{A}&\preceq_{LU}\mathcal{B} \Leftrightarrow
	\begin{cases}
		A_i\leq_{LU} B_i, \ \ \forall i\in I,
		\\
		A_k<_{LU} B_k,  \ \ \text{for at least one}\ \ k\in I. 
	\end{cases}
	\\
	\mathcal{A}&\npreceq_{LU}\mathcal{B} \ \ \text{is the negation of}\ \ \mathcal{A}\preceq_{LU} \mathcal{B}.
	\\
	\mathcal{A}&\prec^s_{LU}\mathcal{B}  \Leftrightarrow A_i<^s_{LU} B_i, \ \ \forall i\in I.
	\\
	\mathcal{A}&\nprec^s_{LU}\mathcal{B} \ \ \text{is the negation of}\ \ \mathcal{A}\prec^s_{LU}\mathcal{B}. 
\end{align*}

For $y\in\mathbb{R}^n$, $(\lambda^L, \lambda^U)\in(\mathbb{R}^m_+\times\mathbb{R}^m)_+\setminus\{(0,0)\}$, and $\mu\in \mathbb{R}^{p}_+$, put
\begin{align*}
	\mathcal{L}(y, \lambda^L,\lambda^U, \mu):=F(y)=\left(F_1(y), \ldots, F_m(y)\right),
\end{align*} 
where \begin{equation*}
	F_i(y):=\frac{f_i(y)}{g_i(y)}=\left[\frac{f_i^L(y)}{g_i^U(y)}, \frac{f_i^U(y)}{g_i^L(y)}\right],\ \ i\in I.
\end{equation*}

In connection with the primal problem \eqref{problem}, we consider the following dual problem in the sense of Mond--Weir:
\begin{align}
	\label{Dual-problem}\tag{FIMD$_{MW}$} 
	LU-&\max\ \  \mathcal{L}(y, \lambda^L,\lambda^U, \mu) 
	\\
	&\text{s.t.}\ \ (y, \lambda^L,\lambda^U, \mu)\in\Omega_{MW},\notag
\end{align}
where the feasible set $\Omega_{MW}$ is defined by
\begin{align*}
	\Omega&_{MW}:=\Big\{(y, \lambda^L, \lambda^U, \mu)\in S\times\mathbb{R}^m_+ \times\mathbb{R}^m_+\times\mathbb{R}^{p}_+\,:\, 0\in \sum_{i\in I}\frac{\lambda_i^L}{g_i^U(y)} \left[\partial f_i^L(y)-\frac{f_i^L(y)}{g_i^U(y)}\partial^+ g_i^U(y)\right]
	\\
	&\ \ \ \ \ \ \ \ \ \ \ \ \ \ \ \ \ \ \ \ \ \ \ \ \ \ \ \ +\sum_{i\in I}\frac{\lambda_i^U}{g_i^L(y)} \left[\partial f_i^U(y)-\frac{f_i^U(y)}{g_i^L(y)}\partial^+ g_i^L(y)\right]+\sum_{j\in J}\mu_j\partial h_j(y)+N(y; S),
	\\
	&\ \ \ \ \ \ \ \ \ \ \ \ \ \ \ \ \ \ \ \ \ \ \ \ \ \ \ \ \ \sum_{j\in J}\mu_j h_j(y)\geq 0, \ \  \sum_{i\in I}(\lambda_i^L+\lambda_i^U)+\sum_{j\in J}\mu_j=1, (\lambda^L,\lambda^U)\neq (0,0)\Big\}.
\end{align*} 

\begin{definition}
	{\rm Let $(\bar y, \bar \lambda^L, \bar \lambda^U, \bar \mu)\in \Omega_{MW}$. We say that
		\begin{enumerate}[(i)]
			\item  	$(\bar y, \bar \lambda^L, \bar \lambda^U, \bar \mu)$ is a {\em type-1 Pareto solution} of \eqref{Dual-problem}, denoted by 
			$$(\bar y, \bar \lambda^L, \bar \lambda^U, \bar \mu)\in \mathcal{S}_1\eqref{Dual-problem},$$ 
			if there is no $(y, \lambda^L, \lambda^U, \mu)\in \Omega_{MW}$ such that $$\mathcal{L}(\bar y, \bar \lambda^L, \bar \lambda^U, \bar \mu)\preceq_{LU}\mathcal{L}(y, \lambda^L, \lambda^U, \mu).$$
			\item  	$(\bar y, \bar \lambda^L, \bar \lambda^U, \bar \mu)$ is a {\em type-2 weakly Pareto solution} of \eqref{Dual-problem}, denoted by 
			$$(\bar y, \bar \lambda^L, \bar \lambda^U, \bar \mu)\in \mathcal{S}_2^w\eqref{Dual-problem},$$ 
			if there is no $(y, \lambda^L, \lambda^U, \mu)\in \Omega_{MW}$ such that $$\mathcal{L}(\bar y, \bar \lambda^L, \bar \lambda^U, \bar \mu)\preceq^s_{LU}\mathcal{L}(y, \lambda^L, \lambda^U, \mu).$$
		\end{enumerate}	
		
	}
\end{definition}

The following theorem describes weak duality relations  between the primal problem \eqref{problem} and the dual problem \eqref{Dual-problem}.
\begin{theorem}[Weak duality]\label{weak-dual} Let $x\in \Omega$ and $(y, \lambda^L, \lambda^U, \mu)\in\Omega_{MW}$.
	\begin{enumerate}[\rm(i)]
		\item If $(F, h)$ is generalized convex on $S$ at $y$, then
		\begin{equation*}
			F(x) \nprec^s_{LU} \mathcal{L}(y, \lambda^L, \lambda^U, \mu).
		\end{equation*}
		\item If $(F, h)$ is strictly generalized convex on $S$ at $y$, then
		\begin{equation*}
			F(x)\npreceq_{LU} \mathcal{L}(y, \lambda^L, \lambda^U, \mu).
		\end{equation*}
	\end{enumerate}
\end{theorem}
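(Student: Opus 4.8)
The plan is to argue by contradiction in both parts, reusing the inequality chain developed in the proof of Theorem \ref{Sufficient-Theorem}, but with the stationarity point now being the dual-feasible $y$ instead of a KKT point. The first thing I would record is that $\mathcal{L}(y, \lambda^L, \lambda^U, \mu) = F(y)$ by definition, so both assertions really compare $F(x)$ with $F(y)$. For part (i), I assume on the contrary that $F(x) \prec^s_{LU} F(y)$, which unwinds componentwise to
\[
\frac{f_i^L(x)}{g_i^U(x)} < \frac{f_i^L(y)}{g_i^U(y)} \quad\text{and}\quad \frac{f_i^U(x)}{g_i^L(x)} < \frac{f_i^U(y)}{g_i^L(y)}, \qquad \forall i \in I.
\]
Since $(y, \lambda^L, \lambda^U, \mu) \in \Omega_{MW}$, the membership defining $\Omega_{MW}$ yields subgradients $x_i^{*L} \in \partial f_i^L(y)$, $x_i^{*U} \in \partial f_i^U(y)$, $y_i^{*L} \in \partial^+ g_i^L(y)$, $y_i^{*U} \in \partial^+ g_i^U(y)$, $z_j^* \in \partial h_j(y)$, and $\omega^* \in N(y; S)$ satisfying the exact analogue of \eqref{equ:5} at $y$.

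Next I would apply the generalized convexity of $(F, h)$ at $y$ to the point $x \in S$, producing a vector $\nu \in [N(y; S)]^\circ$ for which the five families of convexity inequalities hold. Forming $\langle -\omega^*, \nu\rangle \ge 0$ (valid because $\omega^* \in N(y; S)$ and $\nu \in [N(y; S)]^\circ$), substituting the stationarity relation, and bounding above by the convexity inequalities exactly as in \eqref{equa:5}, I arrive at
\[
\sum_{i\in I}\frac{\lambda_i^L}{g_i^U(y)}\left[f_i^L(x) - \frac{f_i^L(y)}{g_i^U(y)}g_i^U(x)\right] + \sum_{i\in I}\frac{\lambda_i^U}{g_i^L(y)}\left[f_i^U(x) - \frac{f_i^U(y)}{g_i^L(y)}g_i^L(x)\right] + \sum_{j\in J}\mu_j\bigl(h_j(x) - h_j(y)\bigr) \ge 0.
\]
The one genuinely new ingredient, and the step I expect to require the most care, is the treatment of the constraint term: here $y$ need \emph{not} satisfy complementarity, and dual feasibility supplies only $\sum_{j\in J} \mu_j h_j(y) \ge 0$. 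Combined with primal feasibility $h_j(x) \le 0$ and $\mu_j \ge 0$, this gives $\sum_{j\in J} \mu_j(h_j(x) - h_j(y)) \le 0$, so the constraint contribution may be discarded to leave a nonnegative convex combination of the fractional differences. Since $(\lambda^L, \lambda^U) \ne (0,0)$ and all coefficients $\lambda_i^L/g_i^U(y)$, $\lambda_i^U/g_i^L(y)$ are nonnegative (recall $g_i^L(y), g_i^U(y) > 0$), at least one bracket is nonnegative; dividing by the positive denominator this reads $f_{i_0}^L(x)/g_{i_0}^U(x) \ge f_{i_0}^L(y)/g_{i_0}^U(y)$ or $f_{i_0}^U(x)/g_{i_0}^L(x) \ge f_{i_0}^U(y)/g_{i_0}^L(y)$, contradicting the strict inequalities assumed. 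This settles (i).

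For part (ii) I assume $F(x) \preceq_{LU} F(y)$. This forces $x \ne y$, since $x = y$ would give $F(x) = F(y)$, incompatible with the required strict component $F_k(x) <_{LU} F_k(y)$; hence strict generalized convexity at $y$ is applicable to the admissible point $x \in S \setminus \{y\}$. Running the same chain with the strict convexity inequalities turns the final estimate into a strict inequality, yielding some $i_0 \in I$ with $f_{i_0}^L(x)/g_{i_0}^U(x) > f_{i_0}^L(y)/g_{i_0}^U(y)$ or $f_{i_0}^U(x)/g_{i_0}^L(x) > f_{i_0}^U(y)/g_{i_0}^L(y)$, which contradicts the componentwise inequalities $F_i(x) \le_{LU} F_i(y)$ implied by $F(x) \preceq_{LU} F(y)$. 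The remaining work is pure bookkeeping: checking that strictness survives passage through the nonnegative combination, which it does precisely because $(\lambda^L, \lambda^U) \ne (0,0)$ guarantees at least one positive coefficient multiplying a strict inequality.
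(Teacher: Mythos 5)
Your proposal is correct and follows essentially the same argument as the paper's own proof: contradiction via the generalized-convexity inequality chain evaluated at the dual-feasible point $y$, elimination of the constraint term using $h_j(x)\leq 0$ together with $\sum_{j\in J}\mu_j h_j(y)\geq 0$ (exactly the paper's \eqref{equ:7}--\eqref{equ:8}), and extraction of an index $i_0$ whose fractional inequality contradicts the assumed domination, with $(\lambda^L,\lambda^U)\neq(0,0)$ securing strictness in part (ii). You correctly identified the only point where this differs from the sufficiency proof of Theorem \ref{Sufficient-Theorem}, namely that dual feasibility replaces complementarity slackness in handling $\sum_{j\in J}\mu_j(h_j(x)-h_j(y))$.
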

\begin{proof} Since $x\in\Omega$ and $(y, \lambda^L, \lambda^U, \mu)\in\Omega_{MW}$,  we have $x, y\in S$,  
	\begin{equation}\label{equ:7}
		h_j(x)\leq 0, \ \ \sum_{j\in J}\mu_jh_j(y)\geq 0,  
	\end{equation}
	and 
	\begin{align*}
		0\in \sum_{i\in I}\frac{\lambda_i^L}{g_i^U(y)} \left[\partial f_i^L(y)-\frac{f_i^L(y)}{g_i^U(y)}\partial^+ g_i^U(y)\right]+\sum_{i\in I}\frac{\lambda_i^U}{g_i^L(y)} &\left[\partial f_i^U(y)-\frac{f_i^U(y)}{g_i^L(y)}\partial^+ g_i^L(y)\right]
		\\
		& +\sum_{j\in J}\mu_j\partial h_j(y)+N(y; S).
	\end{align*}
	This implies that there exist $x_i^{*L}\in \partial f_i^L(y)$, $x_i^{*U}\in \partial f_i^U(y)$, $y_i^{*L}\in \partial^+ g_i^L(y)$, $y_i^{*U}\in \partial^+ g_i^U(y)$, $i\in I$, $z^*_j\in\partial h_j(y)$, $j\in J$, and $\omega^*\in N(y; S)$ such that
	\begin{equation}\label{equ:6}
		\sum_{i\in I}\frac{\lambda_i^L}{g_i^U(y)} \left[x_i^{*L}-\frac{f_i^L(y)}{g_i^U(y)}y_i^{*U}\right]+\sum_{i\in I}\frac{\lambda_i^U}{g_i^L(y)} \left[x_i^{*U}-\frac{f_i^U(y)}{g_i^L(y)}y_i^{*L}\right]+\sum_{j\in J}\mu_j z_j^*=-\omega^*.
	\end{equation}
	
	We first prove (i). Suppose on the contrary that
	\begin{equation*}
		F(x)\prec^s_{LU} \mathcal{L}(y, \lambda^L, \lambda^U, \mu),
	\end{equation*}
	or, equivalently,
	\begin{equation*}
		F_i(x)\prec^s_{LU} \mathcal{L}_i(y, \lambda^L, \lambda^U, \mu), \ \ \forall i\in I.
	\end{equation*}
	Then, 
	\begin{equation}\label{equ:10}
		\frac{f_i^L(x)}{g_i^U(x)}<\frac{f_i^L(y)}{g_i^U(y)}\ \ \text{and}\ \ \frac{f_i^U(x)}{g_i^L(x)}<\frac{f_i^U(y)}{g_i^L(y)}, \ \ \forall i\in I.
	\end{equation}
	By the generalized convex property of $(F, h)$ on $S$ at $y$, for such $x$, there exists $\nu\in [N(y, S)]^\circ$ such that
	\begin{equation*} 
		\begin{aligned}
			&\sum_{i\in I}\frac{\lambda_i^L}{g_i^U(y)} \left[\langle x_i^{*L},\nu\rangle-\frac{f_i^L(y)}{g_i^U(y)}\langle y_i^{*U},\nu\rangle \right]+\sum_{i\in I}\frac{\lambda_i^U}{g_i^L(y)} \left[\langle x_i^{*U},\nu\rangle-\frac{f_i^U(y)}{g_i^L(y)}\langle y_i^{*L},\nu\rangle\right]+\sum_{j\in J}\mu_j \langle z_j^*,\nu\rangle
			\\
			&\leq \sum_{i\in I}\frac{\lambda_i^L}{g_i^U(y)} \left[f_i^L(x)-f_i^L(y)-\frac{f_i^L(y)}{g_i^U(y)}(g_i^U(x)-g_i^U(y)) \right]
			\\
			&\ \ \ \ +\sum_{i\in I}\frac{\lambda_i^U}{g_i^L(y)} \left[f_i^U(x)-f_i^U(y)-\frac{f_i^U(y)}{g_i^L(y)}(g_i^L(x)-g_i^L(y))\right]+\sum_{j\in J}\mu_j (h_j(x)-h_j(y))
			\\
			&=\sum_{i\in I}\frac{\lambda_i^L}{g_i^U(y)} \left[f_i^L(x)-\frac{f_i^L(y)}{g_i^U(y)}g_i^U(x) \right]+\sum_{i\in I}\frac{\lambda_i^U}{g_i^L(y)} \left[f_i^U(x)-\frac{f_i^U(y)}{g_i^L(y)}g_i^L(x)\right]
			\\
			&\ \ \ \ \ \ \ \ \ \ \ \ \ \ \ \ \ \ \ \ \ \ \ \ \ \ \ \ \ \ \ \ \ \ \ \ \ \ \ \ \ \ \ \ \ \ \; +\sum_{j\in J}\mu_j (h_j(x)-h_j(y)).
		\end{aligned}
	\end{equation*} 
	It follows from \eqref{equ:6} and relations $\omega^*\in N(y; S)$ and $\nu\in [N(y, S)]^\circ$ that  
	\begin{equation*} 
		\begin{aligned}
			0\leq \sum_{i\in I}\frac{\lambda_i^L}{g_i^U(y)} \Big[\langle x_i^{*L},\nu\rangle-\frac{f_i^L(y)}{g_i^U(y)}\langle y_i^{*U},\nu\rangle \Big]+\sum_{i\in I}\frac{\lambda_i^U}{g_i^L(y)} \Big[\langle x_i^{*U},\nu\rangle&-\frac{f_i^U(y)}{g_i^L(y)}\langle y_i^{*L},\nu\rangle\Big]
			\\
			&+\sum_{j\in J}\mu_j \langle z_j^*,\nu\rangle.
		\end{aligned}
	\end{equation*}
	Thus,
	\begin{equation*} 
		\begin{aligned}
			0\leq\sum_{i\in I}\frac{\lambda_i^L}{g_i^U(y)} \left[f_i^L(x)-\frac{f_i^L(y)}{g_i^U(y)}g_i^U(x) \right]+\sum_{i\in I}\frac{\lambda_i^U}{g_i^L(y)} &\left[f_i^U(x)-\frac{f_i^U(y)}{g_i^L(y)}g_i^L(x)\right]
			\\
			&+\sum_{j\in J}\mu_j (h_j(x)-h_j(y)).
		\end{aligned}
	\end{equation*}
	It together with \eqref{equ:7} implies that
	\begin{equation}\label{equ:8}
		0\leq\sum_{i\in I}\frac{\lambda_i^L}{g_i^U(y)} \left[f_i^L(x)-\frac{f_i^L(y)}{g_i^U(y)}g_i^U(x) \right]+\sum_{i\in I}\frac{\lambda_i^U}{g_i^L(y)} \left[f_i^U(x)-\frac{f_i^U(y)}{g_i^L(y)}g_i^L(x)\right].
	\end{equation} 
	By \eqref{equ:8} and the fact that  $(\lambda^L,\lambda^U)\neq (0,0)$, it follows that there is $i_0\in I$ such that
	\begin{equation*}
		f_{i_0}^L(x)-\frac{f_{i_0}^L(y)}{g_{i_0}^U(y)}g_{i_0}^U(x)\geq 0 \ \  \text{or}  \ \ f_{i_0}^U(x)-\frac{f_{i_0}^U(y)}{g_{i_0}^L(y)}g_{i_0}^L(x)\geq 0,
	\end{equation*}
	or, equivalently,
	\begin{equation*}
		\frac{f_{i_0}^L(x)}{g_{i_0}^U(x)}\geq\frac{f_{i_0}^L(y)}{g_{i_0}^U(y)} \ \  \text{or}  \ \ \frac{f_{i_0}^U(x)}{g_{i_0}^L(x)}\geq\frac{f_{i_0}^U(y)}{g_{i_0}^L(y)},
	\end{equation*} 
	which contradicts \eqref{equ:10} and therefore completes the proof of (i).
	
	Next we prove (ii). Assume to the contrary that 
	\begin{equation*}
		F(x)\preceq_{LU} \mathcal{L}(y, \lambda^L, \lambda^U, \mu).
	\end{equation*} 
	This means that 
	\begin{equation}\label{equ:11}
		\begin{cases}
			F_i(x)\leq_{LU} F_i(y), \ \ \forall i\in I,
			\\
			F_k(x)<_{LU} F_k(y), \ \ \text{for at least one} \ \ k\in I.
		\end{cases}
	\end{equation}
	Hence, $x\neq y$. By the strictly of $(F, h)$ on $S$ at $y$ and the assumption that $(\lambda^L, \lambda^U)\neq (0,0)$, for such $x$, there exists $\nu\in [N(y, S)]^\circ$ such that
	\begin{equation*} 
		\begin{aligned}
			&\sum_{i\in I}\frac{\lambda_i^L}{g_i^U(y)} \left[\langle x_i^{*L},\nu\rangle-\frac{f_i^L(y)}{g_i^U(y)}\langle y_i^{*U},\nu\rangle \right]+\sum_{i\in I}\frac{\lambda_i^U}{g_i^L(y)} \left[\langle x_i^{*U},\nu\rangle-\frac{f_i^U(y)}{g_i^L(y)}\langle y_i^{*L},\nu\rangle\right]+\sum_{j\in J}\mu_j \langle z_j^*,\nu\rangle
			\\
			&< \sum_{i\in I}\frac{\lambda_i^L}{g_i^U(y)} \left[f_i^L(x)-f_i^L(y)-\frac{f_i^L(y)}{g_i^U(y)}(g_i^U(x)-g_i^U(y)) \right]
			\\
			&\ \ \ \ +\sum_{i\in I}\frac{\lambda_i^U}{g_i^L(y)} \left[f_i^U(x)-f_i^U(y)-\frac{f_i^U(y)}{g_i^L(y)}(g_i^L(x)-g_i^L(y))\right]+\sum_{j\in J}\mu_j (h_j(x)-h_j(y))
			\\
			&=\sum_{i\in I}\frac{\lambda_i^L}{g_i^U(y)} \left[f_i^L(x)-\frac{f_i^L(y)}{g_i^U(y)}g_i^U(x) \right]+\sum_{i\in I}\frac{\lambda_i^U}{g_i^L(y)} \left[f_i^U(x)-\frac{f_i^U(y)}{g_i^L(y)}g_i^L(x)\right]
			\\
			&\ \ \ \ \ \ \ \ \ \ \ \ \ \ \ \ \ \ \ \ \ \ \ \ \ \ \ \ \ \ \ \ \ \ \ \ \ \ \ \ \ \ \ \ \ \ \; +\sum_{j\in J}\mu_j (h_j(x)-h_j(y)).
		\end{aligned}
	\end{equation*} 
	Continuing a similar procedure as in the proof of (i), we arrive at
	\begin{equation*} 
		0<\sum_{i\in I}\frac{\lambda_i^L}{g_i^U(y)} \left[f_i^L(x)-\frac{f_i^L(y)}{g_i^U(y)}g_i^U(x) \right]+\sum_{i\in I}\frac{\lambda_i^U}{g_i^L(y)} \left[f_i^U(x)-\frac{f_i^U(y)}{g_i^L(y)}g_i^L(x)\right].
	\end{equation*} 
	Hence, there exists $i_0\in I$ such that 
	\begin{equation*}
		\frac{f_{i_0}^L(x)}{g_{i_0}^U(x)}>\frac{f_{i_0}^L(y)}{g_{i_0}^U(y)} \ \  \text{or}  \ \ \frac{f_{i_0}^U(x)}{g_{i_0}^L(x)}>\frac{f_{i_0}^U(y)}{g_{i_0}^L(y)}.
	\end{equation*}
	It together with \eqref{equ:11} gives a contradiction, which completes the proof.  
\end{proof}
The following example asserts the importance of the generalized convexity of $(F,h)$ on $S$ used in Theorem \ref{weak-dual}.This means that the conclusion of Theorem \ref{weak-dual} may fail if this property has been violated.  

\begin{example}\label{Example-3}{\rm We consider problem \eqref{problem} with $m=2,$ $n=p=1$, $S=(-\infty, 1]$, $h(x)=-|x|$,  $f^L_1(x)=f_1^U(x)=1-x^3$, $f^L_2(x)=f_2^U(x)=1-x^5$, $g_1^L(x)=g_2^L(x)=x^2+1$, and $g_1^U(x)=g_2^U(x)=x^2+2$. Then
		\begin{equation*}
			F_1(x)=\left[\frac{1-x^3}{x^2+2}, \frac{1-x^3}{x^2+1}\right],  F_2(x)=\left[\frac{1-x^5}{x^2+2}, \frac{1-x^5}{x^2+1}\right],
		\end{equation*} 
and $\Omega=S$. Let $\bar x=1\in \Omega$. We now consider the dual problem \eqref{Dual-problem}. By choosing $\bar y=0\in S$, $\bar\lambda_1^L=\bar\lambda_2^L=\bar\lambda_1^U=\bar\lambda_2^U=\frac{1}{4}$, $\bar\mu=0$, we have $(\bar y, \bar\lambda^L,\bar\lambda^U,\bar\mu)\in\Omega_{MW}$ and that
\begin{equation*}
F(\bar x)=\big([0,0], [0,0]\big)\prec^s_{LU}\mathcal{L}(\bar y, \bar\lambda^L,\bar\lambda^U,\bar\mu)=\left(\left[\frac{1}{2}, 1\right], \left[\frac{1}{2}, 1\right]\right).
\end{equation*}  	
The reason is that the generalized convexity of $(F,h)$  on $S$ has been violated  at $\bar y$.}
\end{example}  

Next we present a theorem that formulates strong duality relations between the primal problem \eqref{problem} and the dual problem \eqref{Dual-problem}.
\begin{theorem}[Strong duality]\label{Strong-duality} Suppose that $\bar x\in \mathcal{S}_2^w\eqref{problem}$ and the \eqref{CQ} is satisfied at this point. Then there exist $(\bar \lambda^L, \bar\lambda^U)\in (\mathbb{R}^m_+\times\mathbb{R}^m_+)\setminus\{(0,0)\}$, and $\bar \mu\in \mathbb{R}^p_+$ such that $(\bar x, \bar \lambda^L, \bar \lambda^U, \bar \mu)\in \Omega_{MW}$ and $F(\bar x)=\mathcal{L}(\bar x, \bar \lambda^L, \bar\lambda^U, \bar \mu)$. Furthermore,
	\begin{enumerate}[\rm(i)]
		\item If $(F, h)$ is generalized convex on $S$ at $\bar x$, then $(\bar x, \bar \lambda^L, \bar \lambda^U, \bar \mu)$ is a type-2 weakly Pareto solution of \eqref{Dual-problem}.
		\item If $(F, h)$  is strictly  generalized convex on $S$ at $\bar x$, then $(\bar x, \bar \lambda^L, \bar \lambda^U, \bar \mu)$ is a type-1 Pareto solution of \eqref{Dual-problem}.
	\end{enumerate}
\end{theorem}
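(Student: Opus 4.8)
The plan is to split the argument into two independent stages: first I would produce the dual-feasible triple of multipliers and verify that objective values coincide, and then I would deduce the two optimality assertions by playing the supposed dual dominance against the weak duality relations of Theorem \ref{weak-dual}.

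For the construction stage I would invoke Theorem \ref{KKT-Theorem}. Since $\bar x\in\mathcal{S}_2^{w}\eqref{problem}$ and \eqref{CQ} holds at $\bar x$, there exist $\bar\lambda_i^L,\bar\lambda_i^U\geq 0$, $i\in I$, and $\bar\mu_j\geq 0$, $j\in J$, with $\sum_{i\in I}(\bar\lambda_i^L+\bar\lambda_i^U)+\sum_{j\in J}\bar\mu_j=1$, $(\bar\lambda^L,\bar\lambda^U)\neq(0,0)$, satisfying the inclusion \eqref{Necessary-conditions} together with the complementarity $\bar\mu_j h_j(\bar x)=0$ for all $j\in J$. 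I would then check membership $(\bar x,\bar\lambda^L,\bar\lambda^U,\bar\mu)\in\Omega_{MW}$ entry by entry against the definition of $\Omega_{MW}$: one has $\bar x\in S$ because $\bar x\in\Omega$; the stationarity inclusion in $\Omega_{MW}$ is exactly \eqref{Necessary-conditions}; the requirement $\sum_{j\in J}\bar\mu_j h_j(\bar x)\geq 0$ follows since in fact $\sum_{j\in J}\bar\mu_j h_j(\bar x)=0$; and the normalization and $(\bar\lambda^L,\bar\lambda^U)\neq(0,0)$ are carried over verbatim from the KKT condition. The equality $F(\bar x)=\mathcal{L}(\bar x,\bar\lambda^L,\bar\lambda^U,\bar\mu)$ is immediate, since $\mathcal{L}(y,\lambda^L,\lambda^U,\mu):=F(y)$ by definition. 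This stage is pure bookkeeping.

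For part (i) I would argue by contradiction. If $(\bar x,\bar\lambda^L,\bar\lambda^U,\bar\mu)$ were not a type-2 weakly Pareto solution of \eqref{Dual-problem}, there would exist $(y,\lambda^L,\lambda^U,\mu)\in\Omega_{MW}$ with $\mathcal{L}(\bar x,\bar\lambda^L,\bar\lambda^U,\bar\mu)\prec^s_{LU}\mathcal{L}(y,\lambda^L,\lambda^U,\mu)$, that is, $F(\bar x)\prec^s_{LU}\mathcal{L}(y,\lambda^L,\lambda^U,\mu)$. Applying the weak duality relation of Theorem \ref{weak-dual}(i) to the primal-feasible point $\bar x\in\Omega$ and the dual-feasible point $(y,\lambda^L,\lambda^U,\mu)$ gives $F(\bar x)\nprec^s_{LU}\mathcal{L}(y,\lambda^L,\lambda^U,\mu)$, a contradiction. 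Part (ii) is entirely parallel: if $(\bar x,\bar\lambda^L,\bar\lambda^U,\bar\mu)$ were not a type-1 Pareto solution, one obtains $(y,\lambda^L,\lambda^U,\mu)\in\Omega_{MW}$ with $F(\bar x)\preceq_{LU}\mathcal{L}(y,\lambda^L,\lambda^U,\mu)$, which is excluded by Theorem \ref{weak-dual}(ii) under strict generalized convexity.

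The delicate point, and the one I would scrutinize most, is the invocation of weak duality in parts (i) and (ii): Theorem \ref{weak-dual} requires the (strict) generalized convexity of $(F,h)$ to be available at the competing dual point $y$, whereas the hypothesis here is phrased at $\bar x$. A direct attempt to run the subgradient inequality of the sufficiency argument at $\bar x$ with test point $x=y$ does \emph{not} close, because the resulting terms $f_i^L(y)-\tfrac{f_i^L(\bar x)}{g_i^U(\bar x)}g_i^U(y)$ come out nonnegative rather than negative, and moreover the constraint contribution $\sum_{j\in J}\bar\mu_j h_j(y)$ has uncontrolled sign since $y$ need not be primal feasible. Hence the clean route is through weak duality, and for the argument to be airtight the convexity must hold at $y$; in effect the convexity hypothesis should be understood to hold throughout the dual-feasible region (equivalently, on all of $S$) rather than only at $\bar x$. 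Once this matching of base points is granted, both optimality conclusions are immediate consequences of Theorem \ref{weak-dual}, and the feasibility stage is routine.
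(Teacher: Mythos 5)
Your proposal follows the paper's own route step for step: Theorem \ref{KKT-Theorem} supplies the multipliers and the dual feasibility of $(\bar x,\bar\lambda^L,\bar\lambda^U,\bar\mu)$, the equality $F(\bar x)=\mathcal{L}(\bar x,\bar\lambda^L,\bar\lambda^U,\bar\mu)$ is definitional, and parts (i)--(ii) are deduced from the weak duality Theorem \ref{weak-dual}. The ``delicate point'' you single out, however, is not a defect of your write-up relative to the paper --- it is a genuine gap in the paper's own proof. The paper argues verbatim: since $(F,h)$ is generalized convex on $S$ at $\bar x$, by Theorem \ref{weak-dual}(i) one has $F(\bar x)\nprec^s_{LU}\mathcal{L}(y,\lambda^L,\lambda^U,\mu)$ for all $(y,\lambda^L,\lambda^U,\mu)\in\Omega_{MW}$; that is, it invokes weak duality against an arbitrary dual-feasible point $y$ while possessing convexity only at $\bar x$, which is exactly the base-point mismatch you identify, since Theorem \ref{weak-dual} needs the convexity at the dual point $y$ (its proof pairs the convexity inequalities at $y$ with the stationarity inclusion of $\Omega_{MW}$, which involves subdifferentials at $y$). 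Your explanation of why no direct repair at $\bar x$ is available is also correct: dual feasibility of $(y,\lambda^L,\lambda^U,\mu)$ constrains only subgradients at $y$, whereas combining the KKT inclusion at $\bar x$ with generalized convexity at $\bar x$ (test point $y$) yields terms $f_i^L(y)-\frac{f_i^L(\bar x)}{g_i^U(\bar x)}g_i^U(y)$ that are positive under the supposed dominance (so no contradiction), plus the term $\sum_{j\in J}\bar\mu_j h_j(y)$ of uncontrolled sign because $y$ need not be primal feasible. So, as literally stated, conclusions (i)--(ii) are established by neither argument; under your strengthened reading --- (strict) generalized convexity at every dual-feasible $y$, e.g.\ at all points of $S$ --- both your proof and the paper's become correct, and yours has the merit of making the needed hypothesis explicit.
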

\begin{proof} Since $\bar x\in \mathcal{S}_2^w\eqref{problem}$ and the \eqref{CQ} is satisfied at this point, by Theorem \ref{KKT-Theorem}, there exist $(\bar \lambda^L, \bar\lambda^U)\in (\mathbb{R}^m_+\times\mathbb{R}^m_+)\setminus\{(0,0)\}$  and $\bar \mu\in \mathbb{R}^p_+$ such that  $(\bar x, \bar \lambda^L, \bar \lambda^U, \bar \mu)\in \Omega_{MW}$. Clearly, 
	$$F(\bar x)=\mathcal{L}(\bar x, \bar\lambda^L, \bar\lambda^U, \bar\mu).$$
	
	(i) Since  $(F, h)$ is  generalized convex on $S$ at $\bar x$, by Theorem \ref{weak-dual}(i), we have
	\begin{equation*}
		F(\bar x)=\mathcal{L}(\bar x, \bar\lambda^L, \bar\lambda^U, \bar\mu)\nprec^s_{LU} L(y,\lambda^L,\lambda^U, \mu) 
	\end{equation*}    
	for all $(y,\lambda^L,\lambda^U, \mu)\in\Omega_{MW}$. This means that $(\bar x, \bar\lambda^L, \bar\lambda^U, \bar\mu)$ is a type-2 weakly Pareto solution of \eqref{Dual-problem}. 
	
	(ii)   If $(F, h)$ is  strictly generalized convex on $S$ at $\bar x$,  then by invoking Theorem \ref{weak-dual}(ii), we obtain
	\begin{equation*}
		F(\bar x)=\mathcal{L}(\bar x, \bar\lambda^L, \bar\lambda^U, \bar\mu)\npreceq_{LU} L(y,\lambda^L,\lambda^U, \mu) 
	\end{equation*}    
	for all $(y,\lambda^L,\lambda^U, \mu)\in\Omega_{MW}$. Thus, $(\bar x, \bar\lambda^L, \bar\lambda^U, \bar\mu)$ is a type-1  Pareto solution of \eqref{Dual-problem}, which completes the proof.
\end{proof}
\begin{remark}
{\rm 
The \eqref{CQ} condition plays an important role in establishing the strong duality results in Theorem \ref{Strong-duality}. This means that if the \eqref{CQ} is not satisfied at a type-2 weakly Pareto solution of \eqref{problem}, then strong dual relations in Theorem \ref{Strong-duality} are no longer true at this point.   Indeed, let us look back at Example \ref{Not-CQ}. We see that $\bar x=0\in \mathcal{S}^w_2\eqref{problem}$. Furthermore,  the \eqref{CQ} is not satisfied at $\bar x$ and there do not exist a triple $(\bar\lambda^L, \bar\lambda^U, \bar\mu)$ such that $(\bar x, \bar\lambda^L, \bar\lambda^U, \bar\mu)\in \Omega_{MW}$. Thus, in this case, we do not have strong dual relations. 

}
\end{remark}
We finish this section by establishing converse-like duality relations for  Pareto solutions between the primal problem \eqref{problem}  and the dual one  \eqref{Dual-problem}.
\begin{theorem}[Converse-like duality] Let $(\bar x, \bar\lambda^L, \bar\lambda^U, \bar\mu)\in \Omega_{MW}$.
	\begin{enumerate}[\rm(i)]
		\item If $\bar x\in\Omega$ and $(F, h)$ is  generalized convex on $S$ at $\bar x$, then $\bar x$ is a type-2  weakly Pareto solution of \eqref{problem}. 
		\item If $\bar x\in\Omega$ and $(F, h)$ is strictly  generalized convex on $S$ at $\bar x$, then $\bar x$ is a type-1  Pareto solution of \eqref{problem}.
	\end{enumerate}	
\end{theorem}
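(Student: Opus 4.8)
The plan is to prove this converse-like duality theorem by reducing it directly to the sufficient optimality conditions already established in Theorem \ref{Sufficient-Theorem}, since the feasibility of $(\bar x, \bar\lambda^L, \bar\lambda^U, \bar\mu)$ in $\Omega_{MW}$ together with the hypothesis $\bar x\in\Omega$ supplies exactly the ingredients needed to invoke that result. First I would observe that membership $(\bar x, \bar\lambda^L, \bar\lambda^U, \bar\mu)\in\Omega_{MW}$ means precisely that the inclusion in \eqref{Necessary-conditions} holds at $\bar x$ with multipliers $(\bar\lambda^L, \bar\lambda^U)\neq (0,0)$ and with $\sum_{i\in I}(\bar\lambda_i^L+\bar\lambda_i^U)+\sum_{j\in J}\bar\mu_j=1$, $\bar\mu\in\mathbb{R}^p_+$.

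The one point requiring care is the complementarity-type relation. The definition of the KKT condition demands $\bar\mu_j h_j(\bar x)=0$ for every $j\in J$, whereas $\Omega_{MW}$ only imposes the weaker aggregate inequality $\sum_{j\in J}\bar\mu_j h_j(\bar x)\geq 0$. However, since $\bar x\in\Omega$ we have $h_j(\bar x)\leq 0$ for all $j\in J$, and combined with $\bar\mu_j\geq 0$ this gives $\bar\mu_j h_j(\bar x)\leq 0$ for each $j$; hence $\sum_{j\in J}\bar\mu_j h_j(\bar x)\leq 0$. Together with the dual feasibility inequality $\sum_{j\in J}\bar\mu_j h_j(\bar x)\geq 0$ we conclude $\sum_{j\in J}\bar\mu_j h_j(\bar x)=0$, and since each summand is nonpositive, each must vanish: $\bar\mu_j h_j(\bar x)=0$ for all $j\in J$. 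This is the key step, and it is where the hypothesis $\bar x\in\Omega$ is genuinely used; I expect this squeeze argument to be the main (though not difficult) obstacle, in the sense that it is the only place where the proof is not a mechanical citation.

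With this complementarity established, $\bar x$ satisfies the KKT condition of Definition in the sense used by Theorem \ref{Sufficient-Theorem}. I would then simply apply that theorem: for part (i), the generalized convexity of $(F,h)$ on $S$ at $\bar x$ yields $\bar x\in\mathcal{S}_2^w\eqref{problem}$, i.e.\ $\bar x$ is a type-2 weakly Pareto solution of \eqref{problem}; for part (ii), the strict generalized convexity of $(F,h)$ on $S$ at $\bar x$ yields $\bar x\in\mathcal{S}_1\eqref{problem}$, i.e.\ $\bar x$ is a type-1 Pareto solution. Both conclusions are then immediate consequences of the already-proven sufficient conditions, so no further variational-analytic machinery is needed beyond what has been developed.

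In summary, the proof structure is: (1) extract from $(\bar x, \bar\lambda^L, \bar\lambda^U, \bar\mu)\in\Omega_{MW}$ the subdifferential inclusion and the normalization with $(\bar\lambda^L,\bar\lambda^U)\neq(0,0)$; (2) use $\bar x\in\Omega$ and the sign conditions to upgrade the aggregate inequality $\sum_{j\in J}\bar\mu_j h_j(\bar x)\geq 0$ to the full complementarity $\bar\mu_j h_j(\bar x)=0$ for all $j$; (3) conclude that $\bar x$ satisfies the KKT condition; and (4) invoke Theorem \ref{Sufficient-Theorem}(i) and (ii) respectively to obtain the two asserted Pareto optimality conclusions.
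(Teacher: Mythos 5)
Your proposal is correct and follows essentially the same route as the paper's own proof: extract the subdifferential inclusion and normalization from $(\bar x,\bar\lambda^L,\bar\lambda^U,\bar\mu)\in\Omega_{MW}$, use $\bar x\in\Omega$ and $\bar\mu\in\mathbb{R}^p_+$ to squeeze the aggregate inequality $\sum_{j\in J}\bar\mu_j h_j(\bar x)\geq 0$ into componentwise complementarity $\bar\mu_j h_j(\bar x)=0$, and then invoke Theorem \ref{Sufficient-Theorem}(i) and (ii). You correctly identified the complementarity upgrade as the only nontrivial step, which is exactly where the paper also puts its effort.
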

\begin{proof} (i) Since $(\bar x, \bar\lambda^L, \bar\lambda^U, \bar\mu)\in \Omega_{MW}$, we have
	\begin{align}
		&0\in \sum_{i\in I}\frac{\lambda_i^L}{g_i^U(\bar x)} \left[\partial f_i^L(\bar x)-\frac{f_i^L(\bar x)}{g_i^U(\bar x)}\partial^+ g_i^U(\bar x)\right]+\sum_{i\in I}\frac{\lambda_i^U}{g_i^L(\bar x)} \left[\partial f_i^U(\bar x)-\frac{f_i^U(\bar x)}{g_i^L(\bar x)}\partial^+ g_i^L(\bar x)\right]\notag
		\\
		&\ \ \ \ \ \ \ \ \ \ \ \ \ \ \ \ \ \ \ \ \ \ \ \ \ \ \ \ \ \ \ \ \ \ \ \ \ \ \ \ \ \ \ \ \ \ \ \ \ \ \ \  +\sum_{j\in J}\mu_j\partial h_j(\bar x)+N(\bar x; S),\notag
		\\
		&\sum_{j\in J}\mu_j h_j(\bar x)\geq 0, \ \  \sum_{i\in I}(\lambda_i^L+\lambda_i^U)+\sum_{j\in J}\mu_j=1, (\lambda^L,\lambda^U)\neq (0,0).\label{equ:12}
	\end{align} 
	
	Since $\bar x\in\Omega$, one has $h_j(\bar x)\leq 0$ for all $j\in J$. Hence, $\mu_jh_j(\bar x)\leq 0$ for all $j\in J$. This together with \eqref{equ:12} yields $\mu_jh_j(\bar x)= 0$ for all $j\in J$. Thus, by the  generalized convexity of $(F, h)$ on $S$ at $\bar x$ and  Theorem \ref{Sufficient-Theorem}(i), $\bar x$ is a type-2  weakly Pareto solution of \eqref{problem}. 
	
	The proof of (ii) is similar to that of (i) by using the strictly  generalized convexity of $(F, h)$  and Theorem \ref{Sufficient-Theorem}(ii), so omitted.
\end{proof}

\vskip 6mm
\noindent{\bf Acknowledgments}

\noindent  The authors would like to thank the two anonymous referees, whose suggestions and comments improved the paper. This research is funded by Hanoi Pedagogical University 2 under grant number HPU2.UT-2021.15.

\end{document}